
\documentclass[final]{siamltex}
\usepackage{amsmath, amsfonts, amssymb, epsfig}
\usepackage{color}
\usepackage{url}


\def\0{{\bf 0}}
\def\1{{\bf 1}}

\def\R{\mathbb{R}}

\def\1{{\bf 1}}

\def\H{{\cal H}}

\newcommand{\hh}{{\widehat{ \bf 1}}}
\newcommand{\hg}{{\bf {\widehat{g}}}}
\newcommand{\g}{{\bf g}}
\newcommand{\w}{{\bf w}}
\newcommand{\q}{{\bf q}}
\newcommand{\x}{{\bf x}}
\newcommand{\y}{{\bf y}}

\newcommand{\z}{{\bf z}}
\newcommand{\uu}{{\bf u}}
\newcommand{\vv}{{\bf v}}

\newcommand{\p}{{\bf p}}

\usepackage{tikz,fullpage}
\usepackage{tkz-berge}
\usepackage{verbatim}
\usetikzlibrary{arrows,shapes}

\def\0{{\bf 0}}
\def\1{{\bf 1}}

\def\R{\mathbb{R}}





\title{Linear Time Average Consensus on Fixed Graphs and Implications for Decentralized Optimization and Multi-Agent Control}


\author{Alex Olshevsky\thanks{Coordinated Science Laboratory, University of Illinois at Urbana-Champaign, {\tt aolshev2@illinois.edu}. This research was supported by NSF under award CMMI-1463262 and AFOSR under award FA-9550­15­10394. A preliminary version of this paper was presented at the 5th IFAC Workshop on Distributed Estimation and Control in Networked Systems in 2015.}  }

\begin{document}

\maketitle

\begin{abstract} We describe a protocol for the average consensus problem on any fixed undirected graph whose convergence time scales linearly in the total number nodes $n$.  The protocol is completely distributed, with the exception of requiring all nodes to know the same upper bound $U$ on the total number of nodes which is correct within a constant multiplicative factor. 

We next discuss applications of this protocol to problems in multi-agent control connected to the consensus problem. In particular, we describe protocols for formation maintenance and leader-following with convergence times which also scale linearly with the number of nodes.

Finally, we develop a distributed protocol for minimizing an average of (possibly nondifferentiable) convex functions $ (1/n) \sum_{i=1}^n f_i(\theta)$, in the setting where only node $i$ in an undirected, connected graph knows the function $f_i(\theta)$. Under the same assumption about all nodes knowing $U$, and additionally assuming that the subgradients of each $f_i(\theta)$ have absolute  values upper bounded by some constant $L$ known to the nodes, we show that after $T$ iterations our protocol has error which is $O(L \sqrt{n/T})$. 

\end{abstract}



\pagestyle{myheadings}
\thispagestyle{plain}

\section{Introduction}  The main subject of this paper is the average consensus problem, a canonical problem in multi-agent control: there are $n$ agents, with each agent $i=1, \ldots, n$ maintaining a value $x_i(t)$, updated as a result of interactions with neighbors in some graph $G$, and the agents would like all $x_i(t)$ to approach the initial average $\overline{x} = (1/n) \sum_{j=1}^n x_j(1)$. There is much work on the consensus problem, both classical and recent \cite{degroot, tsi, cybenko, morse}, driven by a variety of applications in distributed computing and multi-agent control. 

Research on consensus protocols is motivated by emerging applications of autonomous vehicles, UAVs, sensors platforms, and multi-agent swarms in general. Protocols designed for such systems should be distributed, relying only on interactions among neighbors, and resilient to failures of links. 
The design of  protocols with these properties often relies on protocols for the consensus problem.

Indeed, as examples we mention recent work on coverage control \cite{gcb08}, formation control \cite{ofm07, othesis},
distributed estimation \cite{XBL05, XBL06, fagnani1, fagnani2, ribeiro2}, cooperative learning \cite{jad-learning}, and Kalman filtering \cite{kalman1, kalman2, kalman3}, distributed task assignment \cite{CBH09},  and distributed optimization \cite{tsi, four};
these papers and others design distributed protocols either by a direct reduction to an appropriately defined consensus problem or by using consensus 
protocols as a subroutine. 

In this paper, our focus is on designing consensus protocols which converge to the initial average as fast as possible. Specifically, we will be concerned with the number of updates until every $x_i(t)$ is within $\epsilon$ of the initial average $\overline{x}$. We refer to this quantity as  the convergence time of the consensus protocol. Note that convergence time is a function of $\epsilon$ and can also depend on other problem parameters as well, for example the initial values $x_i(1), i = 1, \ldots, n$, the communication graph $G$, and the total number of nodes $n$. {\em Our goal is to design protocols with fast convergence times and to exploit them in some of the application areas which rely on consensus, namely in distributed optimization, formation control, and leader-following. }

\subsection{Previous work on convergence time of average consensus\label{sec:constheorem}} 

This question of understanding the convergence time of consensus schemes has received considerable attention. A number of papers studied the convergence speed of discrete-time (not necessarily average) consensus for a wide class of updates over {\em time-varying directed graphs}; see \cite{tsi, morse, cdc05, baa, angeli, csm, cma, jb}. The worst-case bounds on convergence times derived in these papers tended to scale exponentially in $n$; that is, the time until every node was within some small $\epsilon$ of $\overline{x}$ could grow exponentially in the total number of nodes. 

The first convergence time which was always polynomial was obtained in \cite{sicon}, where a protocol was proposed with time  $O \left( n^3 \log (n ||x(1) - \overline{x}||_2/\epsilon) \right)$ until every node was within $\epsilon$ of the average $\overline{x}$ on any connected undirected graph $G$. In fact, this cubic convergence time was shown for time-varying graph sequences which satisfied a long-term connectivity condition. A better analysis of the same protocol in \cite{four}
obtained an improved quadratic convergence time  of $O \left( n^2 \log (n ||x(1) - \overline{x}||_2/\epsilon) \right)$ in the same setting.

The above convergence times were obtained via consensus protocols where each node updated by setting $x_i(t+1)$ to some 
convex combination of its own value $x_i(t)$ and the values $x_j(t)$ of its neighbors. A parallel line of research considered using 
memory to speed up the convergence time of consensus schemes, specifically by setting $x_i(t+1)$ to be a linear combination of $x_i(t), x_i(t-1)$ and the 
values of the neighbors $x_j(t), x_j(t-1)$. Within the context of distributed computation, the first paper, to our knowledge, to make use of this insight was \cite{muthu} which applied it to design load-balancing protocols by a communicating network of processors in the mid-90s. Within the context of consensus, the first paper to make use of this idea was \cite{cao} with later literature in \cite{acce1, acce2, acce3, acce4, acce5, acce6, acce7, acce8, acce9, sarlette} exploring methods of this type. 

Unfortunately, much of the literature on the subject has update rules depending on the eigenvalues of the graph and thus effectively requiring all the nodes to know the graph $G$. 
Furthermore, none of the consensus protocols which used memory were able to rigorously improve on the worst-case quadratic convergence time for consensus from \cite{four} in the worst case.

On the other hand, moving away from simple schemes where nodes take linear combinations at each step does allow one to speed up convergence
time. For example, \cite{goncalves} demonstrated that if nodes store the values they have received in the past and repeatedly compute
ranks and kernels of matrices which could be $\Omega(n) \times \Omega(n)$, then each node can compute the consensus value in linear time.  Of course, a downside of such schemes is that they are more computationally intensive than the earlier protocols for average consensus which updated at each step by taking convex combinations. 
A related observation is that additional knowledge about the graph on the part of many nodes can lead to fast convergence times in many cases, see e.g., \cite{ang-liu, guillaume, delv}.

On the other hand, linear and finite convergence times can be achieved in any undirected graph by designating one node as the leader and sending all the values to the leader along a spanning tree; the leader can then compute the answer and forward the result back along the same spanning tree. Note that electing a leader and building a spanning tree with the leader as the root can be done in a randomized distributed way in essentially as many iterations as the graph diameter with high probability; see \cite{afek} for precise technical statements. Moreover, such schemes could be made quite light-weight if each node forwards up only an average of the values in its subtree as well as the number of nodes on which this average is based. We refer the reader to \cite{pavone1, pavone2} for schemes along these lines which additionally optimize the number of messages exchanged in the network.  However,  schemes based on these ideas stop working altogether if the communication graphs are arbitrarily time-varying. By contrast, consensus schemes relying on distributed interactions with nearest neighbors, such as the ones we study in this paper, are more robust to unpredictable graph changes. 


\subsection{Distributed optimization of an average  of convex functions\label{sec:optintro}} 

We will devote a large fraction of this paper to studying a generalization of the average consensus problem which is central within the field of decentralized optimization: there are $n$ nodes in an undirected, connected graph, with node $i$ being the only node which knows the convex function $f_i: \R \rightarrow \R$, and the nodes desire to collectively agree on a minimizer of the average function $f(\theta) = (1/n) \sum_{i=1}^n f_i(\theta)$. 

That is, we would like each node $i$ to maintain a state variable $\theta_i(t)$, updated through interactions with neighbors in some graph $G$ and evaluations of gradients of its convex function $f_i(\theta)$, such that all the $\theta_i(t)$ asymptotically aproach the same minimizer of $f(\theta)$. Note that if we set $f_i(\theta) = (\theta - \theta_i)^2$ for some collection of numbers $\theta_1, \ldots, \theta_n$, the optimal solution of the decentralized optimization problem is simply the average of the numbers $\theta_1, \ldots, \theta_n$. Thus this problem contains average consensus
as a special case. 

A number of problems can be cast within the framework of distributed optimization. For example, if $f_i(\theta) = |\theta - \theta_i|$, then the optimal solution is 
the median of the numbers $\theta_1, \ldots, \theta_n$.  For specific application areas where this problem appears, we mention  robust statistical inference \cite{rabbat}, non-autonomous power control \cite{ram_info}, network resource allocation \cite{beck-angelia},
distributed message
routing \cite{neglia}, and spectrum access coordination \cite{li-han}.

The first rigorous analysis of this  problem was given in \cite{NO} and the problem has received considerable attention in recent years 
due its repeated appearance throughout distributed computing and multi-agent control; we mention \cite{NO, NOP, LoO, rabbat, ram_info, jo1, jo2, tsia1, tsia2, tsia3, ermin, bianchi1, bianchi2, bianchi3, ribeiro1,  joh1, yin, chen}, among many others. 

Indeed, the amount of work on this problem is so vast that we do not attempt a general overview, and only discuss previous work on convergence times which is our main focus.  For this problem, convergence time is often defined to be the number of updates until the objective is within $\epsilon$ of its value at the minimizer(s). Focusing on algorithms which assume that, at every step, every node $i$ can query the subgradient of the function $f_i(\cdot)$, and assuming only that the functions are convex, the protocol with the best known convergence time was proposed in \cite{RNV} (building on the previous paper \cite{NO}). Under the assumptions that (i) every node begins with an initial condition within a ball of fixed size of some optimizer $w^*$ (ii) the subgradients of all $f_i(\theta)$ are at most $L$ in absolute value, we have that
optimizing the step-size bounds in \cite{RNV} yields an $O(L^2 n^4/\epsilon^2)$ time until the objective is within $\epsilon$ of its optimal value.  We also mention the follow-up paper \cite{DAW} which showed some improvements on the convergence rates of \cite{RNV} for many specific graphs under the stronger assumption that each node knows the spectral gap of the graph. 

We note that better convergence rates are available under stronger assumptions. For example, under the assumption that the functions $f_i(\cdot)$ are strongly convex, a geometrically convergent algorithm was given in \cite{yin}. Moreover, under the assumption that, at each step, 
every node $i$ can compute a minimizer of  $f_i(\theta)$ plus a quadratic function, further improved rates have been derived. For example, a distributed version of the Alternating Direction Method of Multipliers (ADMM) relying on such updates was analyzed in \cite{ermin} and \cite{bianchi1, ribeiro1}. The former paper showed $O(1/\epsilon)$ time until the objective is within an $\epsilon$ of its optimal value under only the assumption of convexity on the functions $f_i(\cdot)$; the latter two papers showed geometric $O(\log 1/\epsilon)$ convergence under strong convexity. Finally, the recent preprint \cite{asu-recent} studies how geometric convergence time of the ADMM for strongly convex function depends on the network parameters and the condition numbers of the underlying functions.

\subsection{Our results and the structure of this paper}  Under the assumption that the graph $G$ is undirected and connected, and under the further assumption that there is an upper bound $U$ on the number of nodes $n$ which is within a constant multiplicative factor of $n$ and known to all the nodes\footnote{Specifically, to obtain linear convergence time we will need $cn \geq U \geq n$ to hold, where $c$ is some constant independent of $n$.}, {we provide  a protocol whose convergence time scales {linearly} with  the number of nodes $n$.}

This is an improvement over the previously best-known scaling, which was quadratic in $n$ \cite{four}; however, note that our result here  is established under a stronger assumption, namely the nodes knowing the upper bound $U$. 

This additional assumption is satisfied if, for example, the total number of agents is known.  More broadly, the availability of such a $U$ only requires the nodes to know roughly  the total number of nodes in the system, and will be satisfied whenever reasonable bounds on network size are available. For example, consider the scenario when the system starts with a fixed and known number of nodes $n_0$ and where nodes may drop out of the network due to faults. In such a setting, it would take over $80\%$ of the nodes to fail before $n_0$ would cease to be a multiplicative-factor-of-five approximation to the total number of nodes.  Similar statements can be made in more complex scenarios, for example when nodes are allowed to both join and leave the system.

Besides the result on average consensus, this paper has three other contributions. Most prominently, we provide a new protocol for the problem of distributed optimization of  an average  of (possibly nondifferentiable) scalar convex functions, and show that, under appropriate technical assumptions, its convergence time is $O(n/\epsilon^2)$. This convergence time follows as a consequence of our result on linear time average consensus. Recalling our discussion in Section \ref{sec:optintro}, the best previous comparable convergence time was $O(n^4/\epsilon^2)$. However, once again our improved convergence time is established under a stronger assumption, namely the nodes knowing the bound $U$. 

Finally, we will aso discuss the problems of formation control from offset measurements and leader-following, to be formally defined later within the body of the paper. For both of these problems, we will show that protocols with linear convergence times can be given as a consequence of our result on linear time consensus.

We now outline the remainder of this paper. Section \ref{sec:consproof} is dedicated to stating and proving our result on linear time consensus as well as to a discussion of some of its variations.  Section \ref{sec:opt} is dedicated to the problem of decentralized optimization of an average of convex functions.  Section \ref{sec:multi} formally describes the problems of formation control from offset measurements and leader-following and proposes protocols for these problems with a linear convergence time.  Section \ref{sec:simul} contains some simulations and the conclusion of the paper may be found in Section \ref{sec:concl}.

\subsection{Notation} 

We will use the shorthand $[n]$ to denote the set $\{1, \ldots, n\}$. 
We  use the standard notation $N(i)$ to denote the set of neighbors of node $i$ in
$G$; note that in contrast to some of the papers on the subject, we will assume $i \notin N(i)$ for all $i$. The degree of node $i$ is denoted by $d(i)$.  We follow the convention of bolding vectors while scalars remain unbolded. Thus the $i$'th coordinate of a vector $\x \in \R^n$ is denoted by $x_i$. We will also sometimes use $[\x]_i$ to denote the same $i$'th coordinate, and similarly $[A]_{i,j}$ will be used to denote the $i,j$'th entry of the matrix $A$. For a vector $\x$, the notation $[\x]_{1:k}$ denotes the vector in $\R^k$ obtained by taking the first $k$ coordinates of $\x$. The symbol $\1$ stands for the all-ones vector.

\section{Linear time consensus\label{sec:consproof}} The purpose of this section is to propose a new protocol for the average consensus problem and to prove this protocol has convergence 
which is linear in the number of nodes in the network. We start with a description of the protocol itself. 

\subsection{The protocol}

Each node $i$ in a fixed undirected graph $G$ maintains the variables $x_i(t), y_i(t)$ initialized as $y_i(1)=x_i(1)$ and updated as: \begin{align} y_i(t+1)  & =  x_i(t) + \frac{1}{2} \sum_{j \in N(i)} \frac{x_j(t) - x_i(t)}{\max(d(i), d(j))} \nonumber \\
x_i(t+1) & = y_i(t+1) + \left(  1 - \frac{2}{9U+1} \right) \left( y_i(t+1) - y_i(t) \right) \label{alm}
\end{align}  where $N(i)$ is the set of neighbors of node $i$ in $G$ and $d(i)$ is the degree of node $i$. As previously discussed, $U$ can be any number that satisfies $U \geq n$, where $n$ is the number of nodes. 

\smallskip

Before discussing our results, let us briefly describe some intuition for this protocol. Observe that the first line above is a form of the usual consensus update where $y_i(t+1)$ is set to a convex combination of $x_i(t)$ and $x_j(t)$ for neighbors $j$ of $i$. Note, however, that nodes $i$ and $j$ place a weight on each other's values inversely proportional to the larger of their degrees. This is a variation on so-called Metropolis weights, first introduced in  \cite{XBL06}; informally speaking,  placing smaller weights on neighbors with high degree tends to make highly connected cliques of nodes less resistant to changing their values and  speeds up convergence time. 

The second line of the protocol performs an extrapolation step by setting $x_i(t+1)$ to be a linear combination of $y_i(t+1)$ and the previous value $y_i(t)$; speaking informally once again, this linear combination has the effect of adding ``momentum'' by ``pushing'' $x_i(t+1)$ in the direction of the difference of iterates $y_i(t+1)-y_i(t)$. There is no intuitive explanation (as far as we know) for why the addition of such a momentum term speeds up linear iterations, although non-intuitive proofs of speed-up due to the addition of momentum terms are available in many contexts -- see, for example, the references discussed in  Section \ref{sec:constheorem}. 

\subsection{Convergence time}    We now state our convergence time result.

\medskip

\begin{theorem} Suppose each node in an undirected connected graph $G$ implements the update of Eq. (\ref{alm}). 
If $U \geq n$ then we have $$||\y(t) - \overline{x} \1||_2^2 \leq 2 \left( 1 - \frac{1}{9U} \right)^{t-1} ||\y(1) - \overline{x} \1||_2^2.$$  \label{constheorem}
\end{theorem}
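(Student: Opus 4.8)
The plan is to recognize the coupled iteration (\ref{alm}) as a second-order (momentum) linear recursion and to analyze it by passing to a $2n$-dimensional first-order system, then diagonalizing via the eigenvectors of the underlying consensus matrix. Write the first line of (\ref{alm}) as $\y(t+1) = M \x(t)$, where $M$ is the symmetric, doubly stochastic Metropolis-type matrix with $[M]_{ij} = \tfrac{1}{2\max(d(i),d(j))}$ for $j \in N(i)$ and appropriate diagonal entries. Setting $\beta = 1 - \tfrac{2}{9U+1}$, the second line gives $\x(t+1) = (1+\beta)\y(t+1) - \beta \y(t) = (1+\beta) M \x(t) - \beta \x(t-1)$ (using $\y(t) = M\x(t-1)$ for $t \geq 2$, and handling $t=1$ via the initialization $\y(1) = \x(1)$). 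Since $M$ is symmetric, write $M = \sum_{k=1}^n \lambda_k \bv_k \bv_k^\top$ with $\bv_1 = \1/\sqrt{n}$, $\lambda_1 = 1$, and $\lambda_k \in (-1,1)$ for $k \geq 2$ (connectedness rules out $\lambda = \pm 1$ except $\lambda_1$; the Metropolis weighting keeps $M$ positive semidefinite — or at least keeps $\lambda_k$ bounded away from $-1$ — which should be checked). Projecting onto each eigenvector decouples the dynamics: if $a_k(t) = \bv_k^\top \x(t)$, then $a_1(t) \equiv \overline{x}\sqrt n$ is conserved, and for $k \geq 2$, $a_k(t+1) = (1+\beta)\lambda_k a_k(t) - \beta a_k(t-1)$.

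The core of the argument is then the scalar fact that the $2\times 2$ companion matrix
$$C_k = \begin{pmatrix} (1+\beta)\lambda_k & -\beta \\ 1 & 0 \end{pmatrix}$$
has both eigenvalues of modulus at most $\sqrt{\beta}$ uniformly over all $\lambda_k \in (-1,1)$, provided $\beta$ is chosen correctly relative to the spectral gap. The characteristic equation is $\mu^2 - (1+\beta)\lambda_k \mu + \beta = 0$; when the discriminant $(1+\beta)^2\lambda_k^2 - 4\beta$ is nonpositive the roots are complex conjugates with $|\mu|^2 = \beta$, and the discriminant is nonpositive exactly when $|\lambda_k| \leq \tfrac{2\sqrt\beta}{1+\beta}$. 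So I would want to show that the second-largest $|\lambda_k|$ satisfies this bound with the given $\beta = 1 - \tfrac{2}{9U+1}$; this is where the spectral gap of the Metropolis matrix on a connected graph enters, and I expect to invoke a bound of the form $1 - \lambda_2(M) \geq c/n^2$ (and a matching bound away from $-1$), which combined with $U \geq n$ makes $\tfrac{2\sqrt\beta}{1+\beta}$ large enough. Because $C_k$ need not be symmetric, a bound on spectral radius alone does not bound the matrix norm of $C_k^{t}$; I would instead bound $\|C_k^{t}\|$ directly (e.g. via an explicit similarity transformation to a near-normal form, or by writing out $C_k^t$ using the two roots and estimating), absorbing any polynomial-in-$t$ prefactor into the slightly weakened base $\left(1 - \tfrac{1}{9U}\right)$ versus the sharper $\sqrt\beta = \sqrt{1 - \tfrac{2}{9U+1}} \approx 1 - \tfrac{1}{9U+1}$; the factor $2$ on the right-hand side is the slack that soaks up the prefactor and the mismatch between $\y$ and $\x$ norms at $t=1$.

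Assembling: $\|\y(t) - \overline{x}\1\|_2^2 = \sum_{k\geq 2} (\bv_k^\top \y(t))^2 = \sum_{k \geq 2} \lambda_k^2 a_k(t-1)^2 \leq \sum_{k\geq 2} a_k(t-1)^2$, and $a_k(t-1)$ is the first coordinate of $C_k^{t-2}(a_k(1), a_k(0))^\top$-type expression, bounded by $\|C_k^{t-2}\|$ times the initial data; tracing the initial data back through $\y(1) = \x(1)$ and collecting over $k$ yields the stated inequality. The main obstacle, and the step deserving the most care, is the uniform bound on $\|C_k^{t}\|$ over the whole admissible range of $\lambda_k$ — in particular controlling the regime where $\lambda_k$ is close to the threshold $\tfrac{2\sqrt\beta}{1+\beta}$ (so $C_k$ is close to having a Jordan block and the polynomial prefactor is genuinely present) and simultaneously the regime of $\lambda_k$ near $\pm 1$; getting a clean constant there, rather than something that degrades with $n$, is the crux, and it is exactly what forces the particular choice of momentum coefficient $1 - \tfrac{2}{9U+1}$.
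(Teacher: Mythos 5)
Your reduction is the right one and matches the paper's: the mean is conserved, the orthogonal eigendecomposition of the (lazy) Metropolis matrix decouples the dynamics into $2\times 2$ companion blocks, and your threshold computation is consistent with what is needed --- with $\beta = 1 - \tfrac{2}{9U+1}$ one gets $\bigl(\tfrac{2\sqrt{\beta}}{1+\beta}\bigr)^2 = 1 - \tfrac{1}{81U^2}$, so the complex-root condition for the $k$-th block amounts to $1-\lambda_k^2 \geq \tfrac{1}{81U^2}$. But the two steps you defer are precisely the two nontrivial steps, so as it stands this is a plan rather than a proof. First, the spectral gap you ``expect to invoke'' is exactly the content of the paper's Lemma \ref{eigbound}, $\lambda_2(M') < 1 - \tfrac{1}{71n^2}$, and it is not generic: it is derived from the $O(n^2)$ hitting-time bound for Metropolis walks of Nonaka et al.\ together with the hitting-time $\to$ mixing-time $\to$ eigenvalue chain in Levin--Peres--Wilmer, and it is where the particular weights $1/\max(d(i),d(j))$ (plus laziness, which also gives the nonnegativity of the spectrum you flag) earn their keep; a quadratic-in-$1/n$ gap does not hold for arbitrary natural weightings on arbitrary connected graphs. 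Moreover the explicit constant matters, since you need $1-\lambda_2 \geq \tfrac{1}{81U^2}$, not merely $c/n^2$ for some unspecified $c$, to justify the specific momentum coefficient $1-\tfrac{2}{9U+1}$ and rate $1-\tfrac{1}{9U}$ in the statement.

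Second, the step you yourself call the crux --- a uniform bound on powers of the non-normal companion matrix with a clean constant $2$ --- is left open, and the way you propose to close it is not quite right as stated. Near the threshold the two roots coalesce and $C_k^t$ genuinely carries a factor of $t$; absorbing $t\,\beta^{t/2}$ into $(1-\tfrac{1}{9U})^{t/2}$ by a generic operator-norm argument costs a prefactor of order $U$, not $2$, so the ``slack in the base'' alone does not suffice: you must exploit the specific initial condition in which both blocks equal $\y(1)$ (the paper's convention $\y(0)=\y(1)$), which keeps the coefficient of the linear-in-$t$ term of size $1-\sqrt{\beta}$ and makes the constant work out. The paper's device for avoiding this case analysis entirely is the one idea your proposal is missing: it recognizes the $2\times 2$ recursion as Nesterov's accelerated gradient iteration for $g(x)=\tfrac12 x^2$ with $L = \tfrac{1}{1-\lambda}$ and $\mu = \tfrac{1}{81U^2(1-\lambda)}$, so that $Q = L/\mu = 81U^2$ and the momentum coefficient $1-\tfrac{2}{\sqrt{Q}+1}$ is exactly $1-\tfrac{2}{9U+1}$; Nesterov's theorem then yields $w^2(t) \leq 2\,w^2(1)\,(1-\tfrac{1}{9U})^{t-1}$ with no Jordan-block bookkeeping. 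Either import that argument or carry out the explicit trigonometric solution of the recursion with the equal-coordinate initial data and verify the constant; without one of these, and without a proof of the $n^{-2}$ spectral gap, the stated bound is not established.
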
 

\medskip

Note that under the assumption that $U$ is in fact within a constant multiplicative factor of $n$, this implies a $O \left(n \ln \left( {||\y(1) - \overline{x} \1||_2}/{\epsilon} \right) \right)$ convergence time until $||\y(t) - \overline{x} \1||_2$ is below $\epsilon$, which is the linear scaling of the title of this paper. Note that since the infinity-norm of a vector is upper bounded by the  two-norm, we have that $||\y(t) - \overline{x} \1||_{\infty}$ is below $\epsilon$ after this many iterations as well. In a sense, this convergence time is close to optimal, since a graph of $n$ nodes may have diameter $n-1$, implying that a consensus protocol which brings all nodes close to the average in fewer than $n-1$ steps is not possible.




\subsection{Proof of linear convergence time} 
The remainder of this section is dedicated to proving Theorem \ref{constheorem}. We begin  by introducing some additional notation. 

Recall that there is a fixed, undirected connected graph $G = ([n], E)$. We define the Metropolis matrix $M$ to be  the unique stochastic matrix with off-diagonal entries
\[ [M]_{ij} = \begin{cases} \frac{1}{ \max( d(i), d(j))} & \mbox{ if } (i,j) \in E. \\ 0 & \mbox{ if } (i,j) \notin E. \end{cases} \] Note that the above equation specifies all the off-diagonal entries of $M$, which uniquely define the diagonal entries due to the requirement that $M$ be stochastic. Furthermore, we define the {\em lazy Metropolis matrix} $M'$ as \[ M' = \frac{1}{2} I + \frac{1}{2} M, \] where $I$ is the $n \times n$ identity matrix. Note that $M'$ is stochastic, symmetric, and diagonally dominant. By stochasticity, its largest eigenvalue is $1$, and diagonal dominance implies that all of its eigenvalues are nonnegative. We will use $\lambda_2(M')$ to denote the second-largest eigenvalue of $M'$.

Finally, given a Markov chain with probability transition matrix $P$, the hitting time $\H_P(i \rightarrow j)$ is defined to be the expected number of steps until the chain reaches $j$ starting from $i$.  Naturally, $\H_P(i \rightarrow i)=0$ for all $i \in [n]$.  A vector is called stochastic if its entries are nonnegative and add up to $1$. The total variation distance between two stochastic vectors $p,q$ of the same size is defined to be $||p-q||_{\rm TV} = (1/2) ||p-q||_1$.  Given a Markov chain $P$ with stationary distribution $\pi$, the mixing time $t_{\rm mix}(P, \epsilon)$ is defined to be the smallest $t$ such that for all stochastic vectors $p_0$, \[ ||p_0^T P^t - \pi||_{\rm TV} \leq \epsilon \] 

With these definitions in place we now proceed to our first lemma, which states that the lazy Metropolis chain has an eigenvalue gap which is at least quadratic in  $1/n$. 

\medskip

\begin{lemma} \[ \lambda_2(M') < 1 - \frac{1}{71 n^2} \] \label{eigbound}
\end{lemma}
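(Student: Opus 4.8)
The plan is to bound $\lambda_2(M')$ by relating the spectral gap $1 - \lambda_2(M')$ to the mixing time of the lazy Metropolis random walk, and then to bound the mixing time via hitting times. The key chain of inequalities is: (i) for a reversible, lazy Markov chain, the relaxation time $1/(1-\lambda_2)$ is controlled by the mixing time, and (ii) for a reversible chain the mixing time is controlled by the maximal hitting time $t_{\mathrm{hit}} = \max_{i,j} \H_{M'}(i \to j)$. So it suffices to show $t_{\mathrm{hit}} = O(n^2)$ for the lazy Metropolis walk on any connected graph, with an explicit constant.

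First I would establish the hitting time bound. The lazy Metropolis walk moves from $i$ to a neighbor $j$ with probability $\frac{1}{2\max(d(i),d(j))}$. The natural approach is to use the standard electrical-network / effective-resistance identity: the commute time between $i$ and $j$ equals $2|E_w| \cdot R_{\mathrm{eff}}(i,j)$ where $|E_w|$ is the total conductance (sum of all edge weights, appropriately normalized) and $R_{\mathrm{eff}}$ is the effective resistance in the network where edge $(i,j)$ has conductance equal to the transition weight. Since each edge weight is at least $1/(2n)$ (as $\max(d(i),d(j)) \le n-1 < n$, with the extra factor $1/2$ from laziness), the total conductance over all edges is at most roughly $n/2 \cdot$ something, and the effective resistance between any two nodes in a connected graph with edge conductances $\ge 1/(2n)$ is at most $(n-1)\cdot 2n$ (a path of at most $n-1$ edges each of resistance at most $2n$). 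Multiplying these gives $t_{\mathrm{hit}} = O(n^3)$, which is too weak. To get $O(n^2)$ I would instead use the sharper known fact (e.g. from work on Metropolis walks, or a direct argument) that for the lazy Metropolis chain on any connected graph the hitting time is $O(n^2)$ — intuitively because the Metropolis weights $1/\max(d(i),d(j))$ exactly compensate for degree imbalance, making the chain behave like a walk on a graph of bounded average degree. A cleaner route: bound $1 - \lambda_2(M')$ directly using the variational characterization $1 - \lambda_2(M') = \min_{f \perp \1} \frac{\sum_{(i,j)\in E} [M']_{ij}(f_i - f_j)^2 \,/\, (\text{normalization})}{\mathrm{Var}_\pi(f)}$ together with a Poincaré-inequality argument: route a unit of flow along a shortest path between each pair, bound the congestion, and use that paths have length $\le n-1$ and edge weights $\ge 1/(2n)$.

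Concretely, the cleanest self-contained argument I would write: the stationary distribution $\pi$ of $M'$ has $\pi_i = d(i)/(\sum_k d(k)) \cdot (\text{correction})$... actually $M'$ is symmetric, so $\pi$ is uniform, $\pi_i = 1/n$. Then $1 - \lambda_2(M') = \min_{f \perp \1,\, f \ne 0} \frac{\frac12 \sum_{(i,j) \in E} [M']_{ij} (f_i - f_j)^2}{\frac1n \sum_i f_i^2 \cdot (\text{using } f \perp \1 \text{ and Var})}$. Using the canonical-path method (Diaconis–Stroock / Sinclair), choose for each ordered pair $(a,b)$ a shortest path $\gamma_{ab}$ in $G$; then $1 - \lambda_2(M') \ge \frac{1}{\rho}$ where $\rho = \max_{e=(i,j)} \frac{1}{[M']_{ij}\, \pi_i \cdot n} \sum_{\gamma_{ab} \ni e} |\gamma_{ab}| \,\pi_a \pi_b$ (up to the exact bookkeeping of the SIAM-style constant). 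Since $|\gamma_{ab}| \le n-1$, $\pi_a \pi_b = 1/n^2$, there are at most $n^2$ pairs, each edge weight $[M']_{ij} \ge 1/(2(n-1))$, and each edge lies on at most... here is the subtle point: an edge can lie on many shortest paths, up to $\Theta(n^2)$ of them in the worst case, which would give $\rho = O(n^3)$. To shave this to $O(n^2)$ one uses that $\sum_{\gamma_{ab} \ni e} |\gamma_{ab}|$ counts (pair, edge-on-its-path) incidences and a careful choice of paths (or the observation that $\sum_{ab}|\gamma_{ab}| \le n \cdot \#\text{pairs} = n^3$ distributed over $\ge$ ... ) — so the honest statement is that the naive canonical-path bound gives $O(n^3)$, and the factor-of-$n$ improvement to the claimed $1/(71n^2)$ requires the hitting-time approach with the Metropolis-specific $O(n^2)$ hitting time bound rather than generic path-counting.

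Therefore the approach I would actually commit to is the hitting-time one: cite/prove that $\max_{i,j}\H_{M'}(i\to j) \le C n^2$ for the lazy Metropolis chain (this is the technical heart, and the main obstacle — it needs the Metropolis weighting to kill the degree dependence, and getting the explicit constant leading to $71$ is fiddly), then invoke the standard inequality $1 - \lambda_2(M') \ge \frac{1}{\text{(const)}\cdot t_{\mathrm{hit}}}$ valid for reversible lazy chains (via the relation between relaxation time and hitting times, e.g. $t_{\mathrm{hit}} \le \frac{c}{1-\lambda_2}$ for some absolute $c$, or through mixing time bounds). The main obstacle is thus the explicit $O(n^2)$ hitting-time estimate for the lazy Metropolis walk together with tracking constants tightly enough to reach the stated $1/(71n^2)$; everything downstream is a routine invocation of standard reversible-Markov-chain spectral inequalities.
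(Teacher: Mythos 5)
Your committed route --- bound the maximal hitting time of the lazy Metropolis walk by $O(n^2)$, convert that to a mixing-time bound for a reversible lazy chain, and then lower-bound the spectral gap by the reciprocal of the mixing time --- is exactly the paper's argument; the paper supplies the one ingredient you leave as ``cite/prove,'' namely the bound $\max_{i,j}\H_M(i\to j)\le 6n^2$ for the (non-lazy) Metropolis walk, by citing Nonaka, Ono, Sadakane, and Yamashita, and then doubles it for laziness and tracks constants through Eqs.~(10.23) and (12.12) of Levin--Peres--Wilmer to reach $71n^2$. Your correct diagnosis that the electrical-network and canonical-path detours only give $O(n^3)$ is consistent with why the paper goes through hitting times instead.
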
 \begin{proof} Our starting point is the following result, proved in \cite{nonaka}: \[ \max_{i,j \in [n]} \H_M(i \rightarrow j) \leq 6 n^2 \] Each step in the lazy Metropolis chain $M'$ may be viewed as a step in $M$ after a waiting time which is geometrically distributed with parameter $1/2$. It follows that all hitting times in $M'$ are at most double the hitting times in $M$ and therefore \[ \max_{i,j \in [n]} \H_{M'}(i \rightarrow j) \leq 12 n^2 \] We next make use of the fact that for diagonally dominant reversible Markov chains the mixing time can be bounded in terms of hitting time; more precisely,  Eq. (10.23) in \cite{peres} says that for any reversible Markov chain $P$ with stationary distribution $\pi$ such that $[P]_{ii} \geq 1/2$, \[ t_{\rm mix} \left(P, \frac{1}{4} \right) \leq 2 \max_{j \in [n]} \sum_{i \in [n]} \pi_i H(i \rightarrow j) + 1 \] For reasons of convenience, we wish to instead upper bound $t_{\rm mix}(P, 1/8)$. Following the proof in \cite{peres} yields the inequality 
 \[ t_{\rm mix} \left(P, \frac{1}{8} \right) \leq 8 \max_{j \in [n]} \sum_{i \in [n]} \pi_i H(i \rightarrow j) + 1 \]  
 Since $\sum_{i=1}^n \pi_i = 1$, this implies 
\[ t_{\rm mix} \left(P, \frac{1}{8} \right) \leq 8 \max_{i,j \in [n]} \H(i \rightarrow j) + 1 \] It therefore follows that the lazy Metropolis walk (which is clearly reversible, since $M'$ is a symmetric matrix) satisfies
\[ t_{\rm mix} \left(M', \frac{1}{8} \right) \leq 96 n^2 + 1 < 97n^2\] The strict inequality above follows immediately when $n>1$ and holds automatically when $n=1$. Finally, the distance between $\lambda_2(M')$ and $1$ can be upper bounded by the mixing time; indeed, Eq. (12.12) in \cite{peres} implies 
\[ t_{\rm mix}\left(M', \frac{1}{8} \right) \geq \left( \frac{1}{1 - \lambda_2(M')} - 1 \right) \ln 4 \] which in turn implies
\[ \frac{1}{1 - \lambda_2(M')} < 71 n^2  \] which implies the statement of the current lemma. 
\end{proof} 

\medskip
We now shift gears and consider the family of matrices \[ B(\lambda) =  \left[ \begin{array}{cc}  \alpha \lambda  & -(\alpha-1) \lambda \\  1 & 0 \end{array} \right] \] where $\alpha = 2 - 2/(9U+1)$. Note that since $U \geq n \geq 1$ we have that $\alpha \in (1,2)$. Although this appears unmotivated at first glance, the following lemma explains why these matrices are of interest to us. 

\medskip

\begin{lemma} Let $1=\lambda_1, \ldots, \lambda_n$ be the eigenvalues of $M'$ and let $Q$ be an orthogonal matrix whose $i$'th column is an eigenvector corresponding to $\lambda_i$. Let us adopt the convention that $\y(0)=\y(1)$ and consider the change of coordinates $\z(t) = Q^T \y(t)$. We have that 
 \[  \left[ \begin{array}{c} z_i(t+1) \\ z_i(t) \end{array} \right] = B(\lambda_i)   \left[ \begin{array}{c} z_i(t) \\ z_i(t-1) \end{array} \right]  \] \label{coordchange}
\end{lemma}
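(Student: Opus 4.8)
The goal is to show that, after the orthogonal change of coordinates $\z(t) = Q^T\y(t)$, the dynamics of the protocol decouples into $n$ independent two-dimensional linear recursions, each governed by the matrix $B(\lambda_i)$. The strategy is to first write the two-line update (\ref{alm}) in vector/matrix form, then rewrite it as a single first-order recursion on the stacked vector $(\y(t+1), \y(t))$, and finally conjugate by $Q$ to diagonalize $M'$ and read off the claimed block structure.

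**Key steps.** First I would observe that the first line of (\ref{alm}) is exactly the lazy Metropolis update: $y_i(t+1) = x_i(t) + \frac12\sum_{j\in N(i)} \frac{x_j(t)-x_i(t)}{\max(d(i),d(j))}$, and since $\frac12 M$ has off-diagonal entries $\frac{1}{2\max(d(i),d(j))}$ and $M' = \frac12 I + \frac12 M$ is stochastic, this is precisely $\y(t+1) = M'\x(t)$ in vector form. Next, the second line reads $\x(t+1) = \alpha\, \y(t+1) - (\alpha-1)\,\y(t)$ with $\alpha = 2 - 2/(9U+1)$, since $1 + (1 - \frac{2}{9U+1}) = \alpha$ and the coefficient of $\y(t)$ is $-(1-\frac{2}{9U+1}) = -(\alpha-1)$. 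Substituting the expression for $\x(t)$ into $\y(t+1) = M'\x(t)$ yields the single closed recursion $\y(t+1) = M'\big(\alpha\,\y(t) - (\alpha-1)\,\y(t-1)\big) = \alpha M'\,\y(t) - (\alpha-1) M'\,\y(t-1)$, valid for $t \geq 1$; here I use the convention $\y(0) = \y(1)$, which is consistent with the initialization $\x(1) = \y(1)$ because $\y(2) = M'\x(1) = M'\y(1) = \alpha M'\y(1) - (\alpha-1)M'\y(1)$. Then I stack this as $\left[\begin{array}{c}\y(t+1)\\ \y(t)\end{array}\right] = \left[\begin{array}{cc}\alpha M' & -(\alpha-1)M' \\ I & 0\end{array}\right]\left[\begin{array}{c}\y(t)\\ \y(t-1)\end{array}\right]$. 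Finally, apply $\z(t) = Q^T\y(t)$: since $Q^T M' Q = \Lambda = \mathrm{diag}(\lambda_1,\dots,\lambda_n)$, conjugating the stacked system by $\mathrm{diag}(Q^T, Q^T)$ turns each $M'$ into $\Lambda$, and the system decouples coordinate by coordinate into $\left[\begin{array}{c}z_i(t+1)\\ z_i(t)\end{array}\right] = \left[\begin{array}{cc}\alpha\lambda_i & -(\alpha-1)\lambda_i \\ 1 & 0\end{array}\right]\left[\begin{array}{c}z_i(t)\\ z_i(t-1)\end{array}\right] = B(\lambda_i)\left[\begin{array}{c}z_i(t)\\ z_i(t-1)\end{array}\right]$, which is the claim.

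**Main obstacle.** None of this is deep — the computation is essentially bookkeeping — so the only real care points are (i) correctly identifying that the first line of (\ref{alm}) is $M'\x(t)$ rather than $M\x(t)$ (the factor $\frac12$ in front of the sum together with the "$x_i(t) + \ldots$" is what produces the laziness), and (ii) checking that the convention $\y(0)=\y(1)$ is exactly what makes the derived recursion hold at $t=1$, i.e. that it is compatible with the stated initialization $\x(1)=\y(1)$. I would spend a sentence verifying the $t=1$ base case explicitly, since that is the one place where an off-by-one error could creep in, and otherwise the lemma follows by direct substitution.
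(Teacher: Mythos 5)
Your proof is correct and follows essentially the same route as the paper: write the update as $\y(t+1) = M'\x(t)$ with $\x(t) = \alpha\y(t) - (\alpha-1)\y(t-1)$, stack into the block companion form, and conjugate by $\mathrm{diag}(Q^T,Q^T)$ to decouple coordinatewise. Your explicit check that the convention $\y(0)=\y(1)$ makes the recursion valid at $t=1$ is a small but welcome addition that the paper leaves implicit.
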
 

\begin{proof} Indeed, 
\begin{eqnarray*} \y(t+1) =  M' \x(t) \
 =  M' \left( \y(t) + \left(1 - \frac{2}{9U+1} \right) (\y(t) - \y(t-1) \right) 
 =  \alpha M' \y(t) - (\alpha - 1) M' \y(t-1) 
\end{eqnarray*} 
so that 
\[ \left[ \begin{array}{c} \y(t+1) \\ \y(t) \end{array} \right] = \left[ \begin{array}{cc} \alpha M' & -(\alpha - 1) M' \\ I & 0 \end{array} \right] \left[ \begin{array}{c} \y(t) \\ \y(t-1) \end{array} \right] \]  It follows that 
\begin{align*} \left[ \begin{array}{c} \z(t+1) \\ \z(t) \end{array} \right] &  =  \left[ \begin{array}{cc}  Q^T & 0 \\  0 & Q^T \end{array} \right]  \left[ \begin{array}{cc} \alpha M' & -(\alpha - 1) M' \\ I & 0 \end{array} \right] \left[ \begin{array}{cc}  Q & 0 \\  0 & Q \end{array} \right] \left[ \begin{array}{c} \z(t) \\ \z(t-1) \end{array} \right]  
\end{align*} 
Letting $\Lambda = {\rm diag}(\lambda_1, \ldots, \lambda_n)$, we may write this as
\[  \left[ \begin{array}{c} \z(t+1) \\ \z(t) \end{array} \right] = \left[ \begin{array}{cc}  \alpha \Lambda & -(\alpha - 1) \Lambda \\  I & 0 \end{array} \right]   \left[ \begin{array}{c} \z(t) \\ \z(t-1) \end{array} \right]  \] This is equivalent to the statement of this lemma.  
\end{proof}

\medskip

In other words, this lemma shows that our update iteration of Eq. (\ref{alm})  can be viewed in terms of  multiplication by $B(\lambda_i)$ once we switch coordinates 
from $\y(t)$ to $\z(t)$. The next lemma relates the distance to the desired final limit $\overline{x} \1$ in the new coordinates. 

\medskip

\begin{lemma} \[ ||\y(t) - \overline{x} \1||_2^2 = \sum_{i =2}^n z_i^2(t) \] \label{zavg}
\end{lemma}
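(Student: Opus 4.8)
The plan is to exploit the orthogonality of $Q$ together with the fact that $\frac{1}{\sqrt n}\1$ is the eigenvector of $M'$ corresponding to $\lambda_1 = 1$, which must be the first column of $Q$. First I would observe that $\|\y(t) - \overline{x}\1\|_2^2 = \|Q^T(\y(t) - \overline{x}\1)\|_2^2$ since $Q^T$ is orthogonal (length-preserving), and $Q^T(\y(t)-\overline{x}\1) = \z(t) - \overline{x}\,Q^T\1$. So the whole computation reduces to identifying the vector $Q^T\1$.

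Next I would pin down $Q^T\1$. Since $M'$ is symmetric and stochastic, $M'\1 = \1$, so $\1$ is an eigenvector for $\lambda_1 = 1$; normalizing, the first column $q_1$ of $Q$ is $\1/\sqrt n$. Because $Q$ is orthogonal, $[Q^T\1]_1 = q_1^T\1 = \sqrt n$, and for $i \ge 2$, $[Q^T\1]_i = q_i^T\1 = \sqrt n\, q_i^T q_1 = 0$ by orthogonality of distinct columns. Hence $Q^T\1 = \sqrt n\, \e_1$, where $\e_1$ is the first standard basis vector. Therefore $\z(t) - \overline{x}\,Q^T\1$ agrees with $\z(t)$ in every coordinate $i \ge 2$ and equals $z_1(t) - \overline{x}\sqrt n$ in the first coordinate, giving $\|\y(t)-\overline{x}\1\|_2^2 = (z_1(t) - \overline{x}\sqrt n)^2 + \sum_{i=2}^n z_i^2(t)$.

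The last step is to show the first term vanishes, i.e.\ $z_1(t) = \overline{x}\sqrt n$ for all $t$. This follows because the average is preserved by the iteration: $\1^T\y(t+1) = \1^T M'\x(t) = \1^T\x(t)$ (using $\1^T M' = \1^T$), and similarly $\1^T\x(t+1) = \1^T\y(t+1) + (1-\tfrac{2}{9U+1})(\1^T\y(t+1) - \1^T\y(t))$, so by induction all the averages equal $\overline{x}$ — here I would use the initialization $\y(1) = \x(1)$ and the convention $\y(0) = \y(1)$ to start the induction cleanly. Then $z_1(t) = q_1^T\y(t) = \tfrac{1}{\sqrt n}\1^T\y(t) = \tfrac{1}{\sqrt n}(n\overline{x}) = \sqrt n\,\overline{x}$, killing the first term and yielding the claim.

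I do not anticipate a serious obstacle here; the only things to be careful about are the bookkeeping of which eigenvector sits in which column of $Q$ (the statement of Lemma~\ref{coordchange} already fixes $\lambda_1 = 1$ as the first), and verifying the average-preservation invariant with the right initial conditions. One could alternatively phrase the whole argument by writing $\y(t) - \overline{x}\1$ directly in the eigenbasis as $\sum_{i=2}^n z_i(t) q_i$ (since the $q_1$-component is exactly $\overline{x}\1$) and invoking orthonormality, which is essentially the same proof packaged more compactly.
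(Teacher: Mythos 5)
Your proof is correct and follows essentially the same route as the paper: use orthogonality of $Q$ to pass to the $\z$-coordinates, identify $Q^T\1 = \sqrt{n}\,\e_1$ from the fact that $\1/\sqrt{n}$ is the first column of $Q$, and observe that $z_1(t) = (1/\sqrt{n})\1^T\y(t) = \sqrt{n}\,\overline{x}$ so the first coordinate of the difference vanishes. The only difference is that you spell out the average-preservation induction (via $\1^T M' = \1^T$) that the paper asserts without detail.
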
 

\begin{proof} Indeed, since $U$ is orthogonal, \[ ||\y(t) - \overline{x} \1||_2^2 = ||U^T \y(t) - U^T \overline{x} \1 ||_2 = ||\z(t) - \sqrt{n} \overline{x} {\bf e_1} ||_2^2 \]  where the last equality follows from the fact that the first row of $U^T$ is $(1/\sqrt{n}) \1$ and all other rows of $U^T$ are orthogonal to $\1$. However, the first element of $\z(t)$ is  $(1/\sqrt{n}) \1^T \y(t)  = \sqrt{n} \overline{x}$ which immediately implies the lemma.  
\end{proof}  

\medskip

The next lemma is the key point of the analysis: it shows that multiplication by $B(\lambda_i)$ converges to zero at a rate which is geometric with constant $1-1/(9U)$ for any eigenvalue $\lambda_i$ of $M'$. Our main result, Theorem \ref{constheorem}, will follow straightforwardly from this lemma. 

\medskip

\begin{lemma} Suppose $U \geq n$.  If $\lambda_i \neq 1$ is an eigenvalue of $M'$, $r$ is some real number, and $\q(t) = (B(\lambda_i))^{t-1} \left[ \begin{array}{c} r \\ r \end{array} \right]$. Then 
$$q_2^2(t) \leq 2r^2\left( 1 - \frac{1}{9U} \right)^{t-1}.$$\label{nonprincipaldecay}
\end{lemma}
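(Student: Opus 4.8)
The plan is to analyze the $2\times2$ matrix $B(\lambda_i)$ directly via its eigenvalues. Write $\lambda = \lambda_i$ for brevity and recall $\alpha = 2 - 2/(9U+1)$. The characteristic polynomial of $B(\lambda)$ is $\mu^2 - \alpha\lambda\mu + (\alpha-1)\lambda = 0$, so the two roots $\mu_{1,2}$ satisfy $\mu_1+\mu_2 = \alpha\lambda$ and $\mu_1\mu_2 = (\alpha-1)\lambda$. The discriminant is $\Delta = \alpha^2\lambda^2 - 4(\alpha-1)\lambda = \lambda\big(\alpha^2\lambda - 4(\alpha-1)\big)$. The first key step is to show that for $\lambda \in [0,1)$ — recall all eigenvalues of $M'$ are nonnegative and $\lambda_i \neq 1$ — the discriminant is negative except possibly near $\lambda=1$, so that the roots are complex conjugates; in the complex case $|\mu_1| = |\mu_2| = \sqrt{\mu_1\mu_2} = \sqrt{(\alpha-1)\lambda} \le \sqrt{\alpha-1}$, and since $\alpha - 1 = 1 - 2/(9U+1) \le 1 - 1/(9U)$ (which I should verify: $2/(9U+1) \ge 1/(9U)$ iff $18U \ge 9U+1$ iff $9U \ge 1$, true), we get the spectral radius bound $|\mu_{1,2}|^2 \le \alpha - 1 \le 1 - 1/(9U)$. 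This is precisely the geometric rate we want. One then has to handle the real-root regime: I expect that for $\lambda$ close enough to $1$ the roots may be real, and here I would bound the larger root $\mu_+ = \tfrac12(\alpha\lambda + \sqrt{\Delta})$ directly and show $\mu_+^2 \le 1 - 1/(9U)$ still holds, using that $\lambda < 1$ gives enough slack (this is where the precise choice $\alpha = 2 - 2/(9U+1)$ and $\lambda < 1 - 1/(71n^2) \le 1 - 1/(71U^2)$ from Lemma \ref{eigbound} presumably gets used, though Lemma \ref{eigbound} is not cited in the statement of Lemma \ref{nonprincipaldecay}, so perhaps the bound $\lambda<1$ alone suffices via a careful estimate).

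The second step is to convert the spectral radius bound into the stated bound on $q_2^2(t)$, which has the explicit constant $2$ in front. Since $\q(t) = B(\lambda)^{t-1}[r,r]^T$, writing $[r,r]^T$ in the eigenbasis of $B(\lambda)$ and expanding gives $q_2(t)$ as a combination of $\mu_1^{t-1}$ and $\mu_2^{t-1}$ with coefficients depending on $r$ and on the eigenvectors of $B(\lambda)$. In the complex-conjugate case this is real and can be written as $q_2(t) = r\cdot c\cdot \rho^{t-1}\sin(\omega t + \phi)/\sin\omega$-type expression where $\rho = |\mu_1|$; bounding $|\sin|$ terms crudely and tracking constants should yield the factor $2$. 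The cleanest route is probably to bound the matrix norm: show $\|B(\lambda)^{t-1}\|$ applied to $[r,r]^T$ has second coordinate squared at most $2r^2\rho^{2(t-1)}$ by an explicit computation of $B(\lambda)^{t-1}$ using the Cayley-Hamilton recursion $B^{k} = \alpha\lambda B^{k-1} - (\alpha-1)\lambda B^{k-2}$, or by noting $B(\lambda)^{t-1}[r,r]^T$ has entries $r\cdot p_{t-1}$ where $p_k$ solves a scalar linear recurrence with the same characteristic roots, and then bounding $|p_k|$.

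The main obstacle I anticipate is the real-root regime near $\lambda = 1$: when the discriminant is positive the larger root $\mu_+$ can exceed $\sqrt{\alpha-1}$, and one must show it is still bounded by $\sqrt{1 - 1/(9U)}$. Establishing this likely requires the quantitative gap $1 - \lambda \ge \Omega(1/n^2)$ from Lemma \ref{eigbound} together with a Taylor/monotonicity argument in $\lambda$, and getting the constants to line up with $9U$ (rather than, say, $10U$) is the delicate part. The rest — the complex case and the propagation of the constant $2$ — should be routine $2\times2$ linear algebra and elementary estimates.
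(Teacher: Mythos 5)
Your route --- direct spectral analysis of the $2\times 2$ matrix $B(\lambda)$ --- is genuinely different from the paper's, which sidesteps the diagonalization entirely by recognizing the $B(\lambda)$ iteration as Nesterov's accelerated gradient method applied to $g(x)=x^2/2$ with the (deliberately non-tight) parameters $L = 1/(1-\lambda)$ and $\mu = 1/(81U^2(1-\lambda))$, so that $Q = 81U^2$, $\sqrt{Q}=9U$, and both the rate $(1-1/(9U))^{t-1}$ and the constant $2 = 1+\mu$ drop out of the black-box theorem. The paper explicitly calls the direct diagonalization ``somewhat cumbersome,'' and the cumbersomeness lands exactly on the step you label routine. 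First, the good news on your anticipated obstacle: the real-root regime is empty. A computation gives $4(\alpha-1)/\alpha^2 = 1 - 1/(81U^2)$, while Lemma \ref{eigbound} gives $\lambda \leq \lambda_2(M') < 1 - 1/(71n^2) \leq 1-1/(71U^2) < 1-1/(81U^2)$, so the discriminant is strictly negative for every nonprincipal $\lambda>0$ (and $\lambda=0$ makes $B(0)$ nilpotent, a trivial case to check by hand). Note that Lemma \ref{eigbound} \emph{must} be invoked here: $\lambda<1$ alone does not suffice, since the top root of $B(\lambda)$ tends to $1$ as $\lambda \uparrow 1$, which would destroy any fixed geometric rate.

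The genuine gap is in your second step. Expanding $[r,r]^T$ in the eigenbasis of $B(\lambda)$ gives, with $\mu_{1,2}=\rho e^{\pm i\theta}$, something of the form
\begin{equation*}
q_2(t) \;=\; r\,\rho^{t-3}\,\frac{\sin((t-2)\theta) - \rho\sin((t-3)\theta)}{\sin\theta},
\end{equation*}
and the coefficient $1/\sin\theta$ blows up as $\lambda$ approaches $1-1/(81U^2)$, where the two eigenvalues coalesce and $B(\lambda)$ becomes defective. So ``bounding $|\sin|$ terms crudely'' by $1$ does not yield the constant $2$; it yields a $\lambda$-dependent constant that is unbounded over the admissible range of $\lambda$. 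To rescue the argument you must use $|\sin(k\theta)|\leq k\sin\theta$ near the coalescence point, accept a factor growing linearly in $t$, and then absorb it using the strict slack $\rho^2 = (\alpha-1)\lambda \leq \alpha-1 = 1-2/(9U+1) < 1-1/(9U)$; this reduces to showing that a quantity of the form $(1+ck)e^{-c'k}$ stays below $\sqrt{2}$ uniformly in $k$ and $U$, which holds but only after an explicit optimization (the maximum comes out near $2e^{-1/2}\approx 1.21$). That calculation, not the spectral radius bound, is the actual content of the lemma on your route, and it is precisely what the paper's appeal to Theorem \ref{accelthm} is engineered to avoid.
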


\medskip


As we have already mentioned, once this lemma has been established the proof of Theorem \ref{constheorem} will be immediate by putting together the results we have already obtained. Indeed, observe that Lemma \ref{coordchange} showed that after changing coordinates to $\z(t)$, our algorithm corresponds to multiplying $z_i(t)$ by $B(\lambda_i)$; and Lemma \ref{zavg} showed that distance between $\y(t)$ and the final limit $\overline{x} \1$ is exactly $\sum_{i \geq 2} z_i^2(t)$. It remains to bound the rate at which the process of repeated multiplication by $B(\lambda_i)$ converges to zero and that is precisely what Lemma \ref{nonprincipaldecay} does.

It is possible to prove Lemma \ref{nonprincipaldecay} by explicitly diagonalizing $B(\lambda)$ as a function of $\lambda$, but this turns out to be somewhat cumbersome. A shortcut is to rely on a classic result of Nesterov on accelerated gradient descent, stated next. 

\medskip

\begin{theorem}[Nesterov, \cite{nesbook}] \label{accelthm} Suppose $f: \R^n \rightarrow \R$ is a $\mu$-strongly convex function, meaning that 
that for all $\x, \y \in \R^n$, \begin{equation} \label{sconvex} f(\w) \geq f(\uu) + f'(\uu)^T (\w - \uu) + \frac{\mu}{2} ||\w - \uu||_2^2 \end{equation} Suppose further that $f$ is differentiable everywhere and its gradient has Lipschitz constant $L$, i.e., \[ ||f'(\w) - f'(\uu)||_2 \leq L ||\w-\uu||_2 \] Then $f$ has a unique global minimum; call it $\uu^*$. We then have that the update 
\begin{eqnarray}  \w(t+1) & = & \uu(t) - \frac{1}{L} f'(\uu(t)) \nonumber \\ 
\uu(t+1) & = & \w(t+1) + \left( 1 - \frac{2}{\sqrt{Q}+1} \right) \left( \w(t+1) - \w(t) \right) \label{accelupdate}
\end{eqnarray}
 initialized at $\w(1)=\uu(1)$ with $Q=L/\mu$ has the associated convergence time bound  \[ f(\w(t)) - f(\uu^*) \leq \left( 1 - \frac{1}{\sqrt{Q}} \right)^{t-1} \left( f(\w(1)) + \frac{\mu}{2} ||\w(1) - \uu^*||_2^2 - f(\uu^*) \right)\]
\end{theorem}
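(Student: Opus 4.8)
The plan is to prove Theorem \ref{accelthm} by Nesterov's method of \emph{estimate sequences}, the technique underlying the cited reference \cite{nesbook}, since it reproduces exactly the constants in the statement. I would first dispose of the preliminary claim that a unique global minimizer $\uu^*$ exists: the strong convexity hypothesis (\ref{sconvex}) forces $f$ to grow at least quadratically and hence to be coercive, so a minimizer exists, while (\ref{sconvex}) applied at $\uu = \uu^*$ (where $f'(\uu^*) = 0$) gives $f(\w) \geq f(\uu^*) + \frac{\mu}{2} ||\w - \uu^*||_2^2$, which shows the minimizer is unique and will later serve as the workhorse of the bound.

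Next I would introduce the central object. Call a pair consisting of a function sequence $\{\phi_t(\x)\}_{t \geq 0}$ and scalars $\{\lambda_t\} \subset [0,1]$ an \emph{estimate sequence} for $f$ if, for every $\x$ and every $t$,
\[ \phi_t(\x) \leq (1 - \lambda_t) f(\x) + \lambda_t \phi_0(\x). \]
The payoff is a one-liner: if the iterates also satisfy $f(\w(t+1)) \leq \phi_t^* := \min_{\x} \phi_t(\x)$, then evaluating the defining inequality at $\x = \uu^*$ yields
\[ f(\w(t+1)) - f(\uu^*) \leq \lambda_t\left( \phi_0(\uu^*) - f(\uu^*) \right). \]
Thus the whole theorem reduces to (a) building such a sequence with $\lambda_t = (1 - 1/\sqrt Q)^{t}$ and $\phi_0(\uu^*) = f(\w(1)) + \frac{\mu}{2}||\w(1) - \uu^*||_2^2$, and (b) checking the iterate condition $f(\w(t+1)) \leq \phi_t^*$.

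To build the sequence I would start from the quadratic $\phi_0(\x) = f(\w(1)) + \frac{\mu}{2}||\x - \uu(1)||_2^2$ (recall $\uu(1) = \w(1)$, so $\phi_0^* = f(\w(1))$ and the base case $f(\w(1)) \le \phi_0^*$ holds with equality) and define recursively
\[ \phi_{t+1}(\x) = (1 - \alpha_t)\phi_t(\x) + \alpha_t\left[ f(\uu(t)) + f'(\uu(t))^T(\x - \uu(t)) + \frac{\mu}{2}||\x - \uu(t)||_2^2\right], \]
with $\lambda_{t+1} = (1 - \alpha_t)\lambda_t$ and $\lambda_0 = 1$. A routine induction then gives two facts: each $\phi_t$ stays quadratic and so admits a canonical form $\phi_t(\x) = \phi_t^* + \frac{\gamma_t}{2}||\x - \bv_t||_2^2$ with explicitly propagated parameters $\gamma_t, \bv_t, \phi_t^*$; and the estimate-sequence inequality is preserved, because the bracketed term lower-bounds $f(\x)$ by strong convexity (\ref{sconvex}) applied at $\uu(t)$.

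The crux, and the step I expect to be the main obstacle, is verifying $f(\w(t+1)) \leq \phi_t^*$ inductively while simultaneously pinning down $\alpha_t, \gamma_t, \bv_t$ so that the induced $\w,\uu$ updates coincide with Eq. (\ref{accelupdate}). Here the Lipschitz hypothesis enters through the descent lemma $f(\w(t+1)) \leq f(\uu(t)) - \frac{1}{2L}||f'(\uu(t))||_2^2$, obtained by integrating the gradient along the segment to $\w(t+1) = \uu(t) - \frac{1}{L}f'(\uu(t))$. Expanding $\phi_{t+1}^*$ from the canonical form and matching it against this descent inequality forces the choices $\gamma_t \equiv \mu$ and $\alpha_t \equiv \sqrt{\mu/L} = 1/\sqrt Q$, the fixed point of the parameter recursion $\alpha_t^2 L = (1-\alpha_t)\gamma_t + \alpha_t\mu = \gamma_{t+1}$ started at $\gamma_0 = \mu$. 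With these constant values the propagation of the center $\bv_t$ collapses, after algebra, into the extrapolation $\uu(t+1) = \w(t+1) + \frac{\sqrt Q - 1}{\sqrt Q + 1}(\w(t+1) - \w(t))$, and $\frac{\sqrt Q - 1}{\sqrt Q + 1} = 1 - \frac{2}{\sqrt Q + 1}$ is precisely the momentum coefficient of (\ref{accelupdate}). Then $\lambda_t = \prod_{i=0}^{t-1}(1 - \alpha_i) = (1 - 1/\sqrt Q)^{t}$, and since the paper indexes iterates from $\w(1)$, the iterate $\w(t)$ plays the role of the $(t-1)$-th estimate-sequence iterate, so substituting $\lambda_{t-1} = (1-1/\sqrt Q)^{t-1}$ into the payoff inequality gives the stated exponent and completes the proof. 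This matching bookkeeping is the only genuinely delicate part; an alternative that sidesteps it is to exhibit the one-step Lyapunov quantity $f(\w(t)) - f(\uu^*) + \frac{\mu}{2}||\bv_{t-1} - \uu^*||_2^2$ and show it contracts by the factor $(1 - 1/\sqrt Q)$, but the estimate-sequence route matches \cite{nesbook} most directly.
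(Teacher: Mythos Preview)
The paper does not prove Theorem \ref{accelthm} at all; it is quoted as a known result, with the sentence immediately following it stating that ``the theorem is an amalgamation of several facts from Chapter 2.2 of \cite{nesbook} and is taken from \cite{bubeck}.'' So there is no proof in the paper to compare your proposal against.

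That said, your proposal is the right one and is exactly the estimate-sequence argument from Chapter 2.2 of \cite{nesbook} that the paper points to. Your choice $\gamma_0 = \mu$ is the key simplification: it makes $\gamma_t \equiv \mu$ and $\alpha_t \equiv 1/\sqrt{Q}$ constant, which is what collapses the general Nesterov scheme into the fixed-coefficient form of Eq.~(\ref{accelupdate}) and produces the clean rate $(1-1/\sqrt{Q})^{t-1}$. Your index bookkeeping is also correct: with $\w(1)=\uu(1)$ playing the role of Nesterov's $x_0=y_0$, the iterate $\w(t)$ corresponds to $x_{t-1}$, the inductive invariant is $f(\w(t)) \le \phi_{t-1}^*$, and $\lambda_{t-1} = (1-1/\sqrt{Q})^{t-1}$ together with $\phi_0(\uu^*) = f(\w(1)) + \tfrac{\mu}{2}\|\w(1)-\uu^*\|_2^2$ gives precisely the stated bound. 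The one place to be careful when you write it out in full is the algebra that turns the $\bv_t$-recursion into the momentum step; with constant $\alpha$ and $\gamma$ this reduces to checking that the extrapolation coefficient equals $\alpha(1-\alpha)/(\alpha^2+\alpha) = (1-\alpha)/(1+\alpha) = (\sqrt{Q}-1)/(\sqrt{Q}+1)$, which is the number in Eq.~(\ref{accelupdate}).
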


The theorem is an amalgamation of several facts from Chapter 2.2 of \cite{nesbook} and is taken from \cite{bubeck}. We can appeal to it to 
give a reasonably quick proof of Lemma \ref{nonprincipaldecay}. 

\medskip

\begin{proof}[Proof of Lemma \ref{nonprincipaldecay}]  We first write the iteration of multiplication by $B(\lambda)$ in terms of accelerated gradient descent. 
 Indeed,  consider the function $g(x) = (1/2)x^2$ and  let $\lambda$ be any nonprincipal eigenvalue of $M'$. Observe that the derivative of
$g(x)$ is $1$-Lipschitz and therefore, using the fact that $\lambda \in [0,1)$, we obtain that the gradient of $g(x)$ is $L$-Lipschitz for $L = \frac{1}{1-\lambda}$. Furthermore, $g(x)$ is $1$-strongly convex, 
and is consequently $\mu$-strongly convex for any  $\mu \in (0,1]$. Let us choose $\mu = {1}/{(81U^2 (1-\lambda))}$  which works since  by Lemma \ref{eigbound}, 
\[ \mu \leq \frac{71}{81} \frac{n^2}{U^2} \leq 1 \] and therefore $g(x)$ is $\mu$-strongly convex. 

For these choices of $L$ and $\mu$, the parameter $Q=L/\mu$ equals
\[ Q = { \frac{1}{(1-\lambda) \mu}} = 81U^2 \] and the accelerated gradient descent iteration on $g(x)$ takes the form \begin{eqnarray*} w(t+1) & = & u(t) - \frac{1}{1/(1-\lambda)} u(t) \\ 
 u(t+1) & = &  \left( 2 - \frac{2}{\sqrt{Q}+1} \right) w(t+1) - \left( 1 - \frac{2}{\sqrt{Q}+1} \right) w(t) 
\end{eqnarray*} This may be rewritten as 
\begin{eqnarray*} w(t+1) & = & \lambda u(t) \\ 
u(t+1) & = & \left( 2 - \frac{2}{9U+1} \right) w(t+1) - \left( 1 - \frac{2}{9U+1} \right) w(t) 
\end{eqnarray*}  Adopting the convention that $w(0) = w(1)$, we can write \begin{small}
\[ w(t+1) = \lambda u(t) = \lambda \left( \left( 2 - \frac{2}{9U+1} \right) w(t) - \left( 1  - \frac{2}{9U+1}  \right)  w(t-1) \right). \] \end{small}
In matrix form, we have
\[ \left[ \begin{array}{c} w(t+1) \\ w(t) \end{array} \right] = B(\lambda) \left[ \begin{array}{c} w(t) \\ w(t-1) \end{array} \right], \]
so that
\[ \left[ \begin{array}{c} w(t) \\ w(t-1) \end{array} \right] = (B(\lambda))^{t-1} \left[ \begin{array}{c} w(1) \\ w(0) \end{array} \right]. \]
Thus by Theorem \ref{accelthm}, we have that 
\[ \frac{1}{2} w^2(t) \leq \left( 1 - \frac{1}{9U} \right)^{t-1} \left( \frac{1}{2} w^2(1) + \frac{\mu}{2} w^2(1) \right)  \]
which implies the statement of the lemma. 
\end{proof}

\medskip

With these reults established, we now turn to proof of Theorem \ref{constheorem}. As we have already mentioned, this just involved putting the previous lemmas together. 

\medskip

\begin{proof}[Proof of Theorem \ref{constheorem}] Indeed, 
\begin{eqnarray*} ||\y(t) - \overline{x} \1||_2^2 & = &  \sum_{i=2}^n z_i^2(t)  \
 \leq  2 \left( 1 - \frac{1}{9U} \right)^{t-1} \sum_{i=2}^n z_i^2(1) 
 =  2 \left( 1 - \frac{1}{9U} \right)^{t-1} ||\y(1) - \overline{x} \1||_2^2 
\end{eqnarray*} where the first line used  Lemma \ref{zavg} and second line used both Lemma \ref{coordchange} and Lemma \ref{nonprincipaldecay}. This proves the theorem. 
\end{proof} 

\subsection{Two-dimensional grids and geometric random graphs\label{grid}} We now make the observation that the convergence rate of Theorem \ref{constheorem} can be considerably improved for two classes of graphs which often appear in practice, namely two-dimensional grids and geometric random graphs. 

A 2D grid is the graph on $n=k^2$ nodes where each node is associated with a coordinate $(i,j)$ where $i$ and $j$ are integers between $1$ and $k$. Two nodes $(i,j)$ and $(i',j')$ and connected by an edge if and only if $|i - i'| + |j-j'| = 1$. 

A geometric random graph $G(n,r)$ is formed by placing $n$ nodes at uniformly random positions in the square $[0,1]^2$ and putting an edge between two nodes if and only if the Euclidean distance between them is at most $r$. It is known that if the connectivity radius $r$ satisfies $r^2 \geq c (\log n)/n$ for some (known) constant $c$,  then the geometric random graph will be connected with high probability\footnote{We follow the usual convention of saying that a statement about graphs with $n$ nodes holds with high probability if the probability of it holding approaches $1$ as $n \rightarrow \infty$.}
 \cite{avin}.

Both grids and geometric random graphs are appealing models for distributed systems. Neither one has any links that are ``long range.'' Whereas an Erdos-Renyi random graph will have much more favorable connectivity properties, it is challenging to use it as a blueprint for, say, a layout of processors, since regardless of where the nodes are placed there will be links between processors which are not geographically close. Grids and geometric random graphs do not have this problem. Moreover, we further note that geometric random graphs are a common model of wireless networks 
(see e.g., \cite{geogwire}).

In this section, we state and sketch the proof of a proposition which shows that a version of Eq. (\ref{alm}) will have convergence time which scales essentially as {\em the square root of the number of nodes} in grids and geometric random graphs. Intuitively, both of these graph classes have diameters which are substantially less than $n$, and consequently an improvement of the worst-case convergence time is possible. 

\medskip

\begin{proposition} \label{fasterprop} Consider the update rule 
 \begin{eqnarray} y_i(t+1)  & = & x_i(t) + \frac{1}{2} \sum_{j \in N(i)} \frac{x_j(t) - x_i(t)}{\max(d(i), d(j))} \nonumber \\
x_i(t+1) & = & y_i(t+1) + \left(  1 - \gamma \right) \left( y_i(t+1) - y_i(t) \right) 
\end{eqnarray} and fix $U \geq n$.  There is a choice of $\gamma = \Theta \left( \frac{1}{\sqrt{U \log U}} \right)$ such that if $G$ is a grid on $n$ nodes, then \begin{equation} \label{fasterconv} ||\y(t) - \overline{x} \1||_2^2 \leq 2 \left( 1 - \gamma \right)^{t-1} ||\y(1) - \overline{x} \1||_2^2 \end{equation} Moreover, if 
$G$ is a geometric random graph $G(n,r)$ with $r^2 \geq \frac{8c \log n}{n}$ where $c>1$, then with high probability we have that Eq. (\ref{fasterconv}) holds for all $t \geq 1$.
\end{proposition}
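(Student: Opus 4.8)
The plan is to reprove Theorem~\ref{constheorem} almost verbatim, changing only the spectral input on which it rests. Inspecting that proof, the factor $9U$ in the rate enters solely through the choice $Q = 81U^2$ in Lemma~\ref{nonprincipaldecay}, which in turn is dictated by the quadratic-in-$n$ gap bound of Lemma~\ref{eigbound}: one needs $Q \ge 1/(1-\lambda_i)$ for every nonprincipal eigenvalue $\lambda_i$ of $M'$, and Lemma~\ref{eigbound} only guarantees $1/(1-\lambda_i)\le 1/(1-\lambda_2(M')) \le 71 n^2 \le 71 U^2$. So it suffices to establish, for grids and (with high probability) for the geometric random graphs in question, the sharper estimate
\[ \frac{1}{1-\lambda_2(M')} \;\le\; C\, n \log n \]
for an absolute constant $C$; everything downstream then goes through with $81U^2$ replaced by $\lceil C U\log U\rceil$, and $\gamma = \Theta(1/\sqrt{U\log U})$ emerges in place of $1/(9U)$. (We may replace $n$ by $U$ throughout since the nodes only know $U\ge n$ and $C n\log n \le C U\log U$.)

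To get this sharper gap I would reuse the exact chain of implications from the proof of Lemma~\ref{eigbound}. Doubling hitting times to pass from $M$ to the lazy chain $M'$, then applying Eqs.~(10.23) and (12.12) of \cite{peres}, gives $t_{\rm mix}(M',1/8) = O\!\left(\max_{i,j}\H_{M'}(i\rightarrow j)\right)$ and hence $1/(1-\lambda_2(M')) = O\!\left(\max_{i,j}\H_{M}(i\rightarrow j)\right)$; so it is enough to prove $\max_{i,j}\H_{M}(i\rightarrow j) = O(n\log n)$. For the $k\times k$ grid this is classical for the simple random walk — the corner-to-corner hitting time is $\Theta(n\log n)$ — and transfers to the Metropolis walk by a routine comparison of the two chains, since every degree lies in $\{2,3,4\}$ and so every Metropolis transition probability is within a factor $2$ of the corresponding simple-random-walk probability. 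For $G(n,r)$ with $r^2 \ge 8c\log n/n$, $c>1$, one first shows that with high probability the graph is connected and all degrees are $\Theta(\log n)$ (Binomial concentration plus a union bound, handling boundary cells separately), so that again the Metropolis weights are within a constant factor of the simple-random-walk weights; one then invokes the known $O(n\log n)$ bounds on hitting (indeed cover) times of random walks on random geometric graphs at this connectivity scale, see e.g.\ \cite{avin}.

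With $1/(1-\lambda_2(M')) \le C n\log n$ in hand, I would re-run Lemma~\ref{nonprincipaldecay} with $Q := \lceil C U\log U\rceil$ in place of $81U^2$. For a nonprincipal eigenvalue $\lambda_i$, take $g(x) = \tfrac12 x^2$, whose gradient is $1$-Lipschitz and hence $L$-Lipschitz with $L = 1/(1-\lambda_i) \ge 1$, so that the gradient step of size $1/L$ is $w\mapsto \lambda_i w$; and take $g$ to be $\mu$-strongly convex with $\mu = 1/(Q(1-\lambda_i))$, which is admissible because $\mu\le 1$ (since $1/(1-\lambda_i)\le 1/(1-\lambda_2(M'))\le Q$). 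Then $L/\mu = Q$ for \emph{every} $i$, so Theorem~\ref{accelthm} applies to all of these $g$'s with the same parameter $Q$: the update uses the single momentum coefficient $1 - 2/(\sqrt Q+1) = 1-\gamma$ with $\gamma = \Theta(1/\sqrt{U\log U})$, and repeated multiplication by $B(\lambda_i)$ starting from $(r,r)$ decays geometrically at rate $1 - 1/\sqrt Q$ (again $1-\Theta(1/\sqrt{U\log U})$), the leading constant $2$ coming as before from $g(r) + \tfrac\mu2 r^2 \le r^2$. Combining this with Lemmas~\ref{coordchange} and~\ref{zavg} exactly as in the proof of Theorem~\ref{constheorem} — summing over $i = 2,\dots,n$ — yields Eq.~(\ref{fasterconv}) for the stated $\gamma$.

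The main obstacle is the hitting-time input for the geometric random graph: pinning down the ``with high probability'' claim requires combining degree concentration (a union bound over all $n$ vertices) with a hitting-time estimate robust to boundary effects and to the random occupancy numbers of the $\Theta(n/\log n)$ cells of side $r$, which is where one must either quote the random-walk-on-random-geometric-graph literature with care or reprove such an estimate by comparison to the grid. By contrast, the grid case, the Metropolis-versus-simple-random-walk comparison, and the propagation of constants through Lemmas~\ref{eigbound}--\ref{nonprincipaldecay} are all routine. (For the grid alone one could in fact shave the $\log n$, since its relaxation time is $\Theta(n)$; the hitting-time route is taken only to keep the argument uniform with the geometric random graph case.)
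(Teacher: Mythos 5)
Your proposal is correct and follows essentially the same route as the paper's own proof sketch: replace the $6n^2$ hitting-time input by an $O(n\log n)$ bound for the grid (via Peres et al.) and for $G(n,r)$ (via Avin--Ercal, with high probability), push this through the mixing-time and eigenvalue-gap chain of Lemma~\ref{eigbound} to get $1/(1-\lambda_2(M')) = O(n\log n)$, and then rerun Lemma~\ref{nonprincipaldecay} with $Q = \Theta(U\log U)$ so that Nesterov's theorem yields the rate $1-\Theta(1/\sqrt{U\log U})$. The only cosmetic difference is that you derive the Metropolis hitting-time bound by comparison with the simple random walk (using bounded degree ratios), whereas the paper quotes the relevant results for the Metropolis chain more directly; both are fine.
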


Note that under the assumption that $U$ is within a constant factor of $n$, the above proposition gives a convergence time of 
$O \left( \sqrt{n \log n} \ln \left( {||\y(1) - \overline{x} \1||_2}/{\epsilon} \right) \right)$ until $||y(t) - \overline{x} \1||_{2}$  is at most 
$\epsilon$. 

\begin{proof}[Proof sketch of Proposition \ref{fasterprop}] The proof parallels the proof of Theorem \ref{constheorem} and we only sketch it here. The key point is that our starting point for the proof of Theorem \ref{constheorem} was the bound ${\cal H}_M(i,j) \leq 6n^2$, proved in \cite{nonaka}; however, in both the grid and the geometric random graph the better bound
 ${\cal H}_M(i,j) \leq O(n \log n)$, is available. In the 2D grid, this can be obtained by putting together the arguments of  Propositions 9.16 and 10.6 of \cite{peres}. For the geometric random graph under the assumed lower bound on $r$, this is true with high probability as a consequence of Theorem 1 of \cite{avin}. 

The rest of the argument simply reprises verbatim the proof of Theorem \ref{constheorem}.   Indeed, the argument of Lemma \ref{eigbound} repeated verbatim now leads to the estimate $\lambda_2(M') \leq 1 - 1/\Omega( n \log n)$. We then proceed as before until reaching Lemma \ref{nonprincipaldecay}, when instead of taking $\mu = 1/((1-\lambda) 81U^2)$ we take $\mu = 1/((1-\lambda) \Theta(U \log U))$. This leads to a value of $Q = L/\mu = \Theta( U \log U )$ and consequently $\sqrt{Q} = \Theta \left( \sqrt{U \log U} \right)$. Application of Nesterov's Theorem \ref{accelthm} 
then proves the current proposition. 
\end{proof}

\section{Linear Time Decentralized Optimization\label{sec:opt}}

In this section, we describe how we can use the the convergence time result of Theorem \ref{constheorem} to design a linear time protocol for the decentralized optimization problem 
\[ \min_{\theta \in \R}  ~ \frac{1}{n} \sum_{i=1}^n f_i(\theta), \] in the setting when only node $i$ in an undirected connected graph $G$ knows the convex function $f_i(\cdot)$. The goal is for every node to 
maintain a variable, updated using a distributed algorithm, and these variables should all converge to the same minimizer of the above optimization problem. 

This problem goes by the name of distributed optimization or decentralized optimization, following \cite{NO} which provided the first rigorous analysis of it. More sophisticated versions have been studied, for example by introducing constraints at each node or couplings between nodes in the objective, but here we will restrict our analysis to the version above. 

\subsection{The protocol} Each node $i$ in the fixed, undirected graph $G$ starts with initial state $x_i(1)$, and besides maintaining the variable $x_i(t)$, it maintains two extra variables $z_i(t)$ and $y_i(t)$. Both $z_i(1)$ and $y_i(1)$ are initialized to $x_i(1)$, and the variables are updated as 

\begin{eqnarray} y_i(t+1) & = & x_i(t) +  \frac{1}{2} \sum_{j \in N(i)} \frac{x_j(t) - x_i(t)}{\max(d(i), d(j))} - \beta g_i(t) \nonumber \\ 
z_i(t+1) & = & y_i(t) - \beta g_i(t)  \label{optaccel} \\ 
x_i(t+1) & = & y_i(t+1) + \left(  1 - \frac{2}{9U+1} \right) \left( y_i(t+1) - z_i(t+1) \right), \nonumber
\end{eqnarray}  where $g_i(t)$ is the subgradient of $f_i(\theta)$ at $\theta = y_i(t)$ and $\beta$ is a step-size to be chosen later.  

\medskip

We briefly pause to describe the motivation behind this scheme. This method builds on the distributed subgradient method 
\begin{equation} \label{distsubg} x_i(t+1) = \sum_{j \in N(i)} a_{ij}(t) x_j(t) - \alpha g_i(t), \end{equation} where $[a_{ij}(t)]$ is usually taken to be a doubly stochastic matrix and $g_i(t)$  a subgradient of the 
function $f_i(\theta)$ at the point $\theta = x_i(t)$. The above method was first rigorously analyzed in \cite{NO}. Intuitively, every 
node takes a subgradient of its own function and these subgradients are then ``mixed together'' using the standard consensus scheme of multiplication by the
doubly stochastic matrices $[a_{ij}(t)]$. 

The method of Eq. (\ref{optaccel}) proceeds via the same intuition, only replacing the usual consensus scheme with the linear-time consensus scheme from Section \ref{sec:consproof}. Since the linear time consensus scheme keeps track of two variables, which must be initalized to equal each other, each new subgradient has to enter the update Eq. (\ref{optaccel}) twice, once in the update for $y_i(t+1)$ and once in the update for $z_i(t+1)$. As a result, the final scheme has to maintain three variables, rather than the single variable of Eq. (\ref{distsubg}).

\subsection{Convergence time} 

We begin by introducing some notation before stating our result on the performance of this scheme. We will use ${\cal W^*}$ to denote the set of global minima of $f(\theta)$. We will use the standard hat-notation for a running average, e.g., 
 $\widehat{y}_i(t) = (1/t) \sum_{k=1}^t y_i(k)$. To measure the convergence speed of our protocol, we introduce two measures of performance. One is the dispersion of a set of points, intuitively measuring how far from each other the points are, \[ {\rm Disp}(\theta_1, \ldots, \theta_n) = \frac{1}{n} \sum_{i=1}^n \left| \theta _i - {\rm median}(\theta_1, \ldots, \theta_n) \right|. \] The other is the ``error'' corresponding to the function $f(\theta) = (1/n) \sum_{i=1}^n f_i(\theta)$, namely
\[ {\rm Err}(\theta_1, \ldots, \theta_n) = \left( \frac{1}{n} \sum_{i=1}^n f_i (\theta_i) \right) - f(w^*), \] where $w^*$ is any point in the optimal set ${\cal W}^*$. Note that to define ${\rm Err}(\theta_1, \ldots, \theta_n)$ we need to assume that the set of global minima ${\cal W}^*$ is nonempty. For the remainder of this section, we will assume in this section that $\x(t), \y(t), \z(t)$ are generated according to Eq. (\ref{optaccel}). 

With these definitions in place, we have the following theorem on the convergence time of the method of Eq. (\ref{optaccel}).

\medskip

\begin{theorem} Suppose that $U \geq n$ and $U=\Theta(n)$, that $w^*$ is a point in ${\cal W}^*$, and that the absolute value of all the subgradients of all $f_i(\theta)$ is bounded by some constant $L$. If every node in an undirected connected graph $G$ implements the update of Eq. (\ref{optaccel}) with the step-size $\beta = \frac{1}{L \sqrt{UT}}$ we then have  \begin{eqnarray}
{\rm Disp}(\widehat{y}_1(T), \ldots, \widehat{y}_n(T)) & = & O \left(  \sqrt{\frac{n}{T}} \left(  \frac{||\y(1) - \overline{x} \1||_2}{\sqrt{T}} + 1 \right) \right)  \label{meddecay} \\
 {\rm Err}(\widehat{y}_1(T), \ldots, \widehat{y}_n(T)) & = & O \left( L \sqrt{\frac{n}{T}}  \left(  \frac{||\y(1) - \overline{x} \1||_2}{\sqrt{T}} + 1 + (\overline{x} - w^*)^2 \right) \right)  \label{funcdecay}
\end{eqnarray}  \label{opthm}
\end{theorem}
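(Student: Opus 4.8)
The plan is to decompose the problem into a ``consensus'' part and an ``optimization'' part and combine them through the usual inequality that bounds the distributed subgradient error by the centralized subgradient error plus the disagreement among the iterates. First I would extract the structural identity that makes the scheme work. Writing \eqref{optaccel} in vector form as $\y(t+1)=M'\x(t)-\beta\g(t)$, $\z(t+1)=\y(t)-\beta\g(t)$, $\x(t+1)=\alpha\y(t+1)-(\alpha-1)\z(t+1)$ with $\alpha=2-2/(9U+1)$ and $\g(t)=(g_1(t),\dots,g_n(t))^T$, and using $\1^T M'=\1^T$, a one-line induction gives that the coordinate averages satisfy $\overline y(t)=\overline z(t)=\overline x(t)=\overline x-\beta\sum_{k=1}^{t-1}\overline g(k)$, where $\overline g(k)=\frac1n\sum_i g_i(k)$ with $g_i(k)\in\partial f_i(y_i(k))$. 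Thus $\overline y(t)$ is exactly a centralized subgradient method run on the averaged subgradients --- this is precisely why $y_i,z_i$ are initialized equal and the method carries three variables. On the orthogonal complement of $\1$, the same three equations show that the stacked state $(\y(t),\z(t))$ evolves as $(\y(t+1),\z(t+1))=\mathcal B\,(\y(t),\z(t))-\beta(\g(t),\g(t))$, where $\mathcal B=\left[\begin{array}{cc}\alpha M' & -(\alpha-1)M'\\ I & 0\end{array}\right]$, started from $(\y(1),\y(1))$.

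Diagonalizing $M'$ exactly as in Lemma \ref{coordchange} and unrolling, the $i$-th nonprincipal mode $\zeta_i(t)=[Q^T\y(t)]_i$ becomes $\zeta_i(t)=\bigl[B(\lambda_i)^{t-1}(\zeta_i(1),\zeta_i(1))^T\bigr]_1-\beta\sum_{s=1}^{t-1}\bigl[B(\lambda_i)^{t-1-s}(h_i(s),h_i(s))^T\bigr]_1$ with $h_i(s)=[Q^T\g(s)]_i$. The key observation is that \emph{every} vector appearing here is of the ``$(r,r)$'' form, so Lemma \ref{nonprincipaldecay} applies term by term, bounding each contribution by $\sqrt2\,|r|\,(1-1/(9U))^{(\cdot)/2}$, with no operator-norm estimate on the powers of $B(\lambda_i)$ needed. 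Summing over the modes with Minkowski's inequality, over $s$ as a geometric series (using $1-\sqrt{1-1/(9U)}\ge\frac1{18U}$, which converts the rate into an $O(U)$ factor), and over $t$, while using only $\|\g(s)\|_2\le L\sqrt n$, I obtain the cumulative disagreement bound $\sum_{t=1}^T\|\y(t)-\overline y(t)\1\|_2\le C\bigl(U\|\y(1)-\overline x\1\|_2+\beta L\sqrt n\,UT\bigr)$.

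For the optimization part I would run the textbook subgradient argument on $\overline y(t+1)=\overline y(t)-\beta\overline g(t)$: expand $|\overline y(t+1)-w^*|^2$, bound $g_i(t)(y_i(t)-w^*)\ge f_i(y_i(t))-f_i(w^*)$, $|g_i(t)(y_i(t)-\overline y(t))|\le L|y_i(t)-\overline y(t)|$, and $\overline g(t)^2\le L^2$, telescope over $t=1,\dots,T$ (discarding the nonnegative final term), divide by $T$, and use Jensen $f_i(\widehat y_i(T))\le\frac1T\sum_t f_i(y_i(t))$ together with $\sum_i|y_i(t)-\overline y(t)|\le\sqrt n\,\|\y(t)-\overline y(t)\1\|_2$. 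This gives ${\rm Err}(\widehat y_1(T),\dots,\widehat y_n(T))\le\frac{(\overline x-w^*)^2}{2\beta T}+\frac{\beta L^2}{2}+\frac{L}{\sqrt n T}\sum_{t=1}^T\|\y(t)-\overline y(t)\1\|_2$. For the dispersion, since the median minimizes mean absolute deviation, ${\rm Disp}(\widehat y_1(T),\dots,\widehat y_n(T))\le\frac1n\sum_i|\widehat y_i(T)-\tfrac1n\sum_j\widehat y_j(T)|\le\frac1{\sqrt n T}\sum_{t=1}^T\|\y(t)-\overline y(t)\1\|_2$. Substituting the cumulative disagreement bound and then $\beta=1/(L\sqrt{UT})$ and $U=\Theta(n)$ turns every term into a multiple of $\sqrt{n/T}\bigl(\|\y(1)-\overline x\1\|_2/\sqrt T+1+(\overline x-w^*)^2\bigr)$, which is \eqref{meddecay}--\eqref{funcdecay}.

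The main obstacle is the cumulative disagreement estimate, and specifically obtaining it with the correct power of $n$. The naive route --- bounding the forced response mode-by-mode by $\sum_s\|B(\lambda_i)^{t-1-s}\|_2\cdot(\text{forcing})$ --- does not work, because $B(\lambda)$ is non-normal and its powers exhibit transient growth near the value of $\lambda$ where its two eigenvalues coincide; the resulting polynomial-in-$t$ prefactor would cost a spurious factor of $\sqrt n$ and destroy the claimed rate. What rescues the argument is exactly the design of the protocol: the subgradient enters every eigen-mode as a perturbation of the special $(r,r)$ shape, so one never needs an operator-norm bound on $B(\lambda)^m$ and can invoke Lemma \ref{nonprincipaldecay} term by term. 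Once this decomposition is in place, the remaining computations are routine.
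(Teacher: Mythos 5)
Your proposal is correct and follows essentially the same route as the paper: rewrite the iteration as a perturbed consensus recursion whose forcing terms have the stacked $[\w;\w]$ form, apply the consensus decay bound term by term to obtain the cumulative disagreement estimate, run the standard telescoping subgradient argument on the evolution of the coordinate average, and use the median's $\ell_1$-optimality for the dispersion bound. Your observation that the $(r,r)$ shape of the forcing is what lets one avoid operator-norm bounds on the non-normal $B(\lambda)$ is precisely the (implicit) reason the paper's Lemma \ref{intolemma} is stated only for vectors of the form $[\w;\w]$; the remaining differences (mode-level versus vector-level unrolling, and keeping $f_i(y_i(t))$ with a final Jensen step rather than passing through $f(\widehat{\overline{x}}(T))$ with a correction term) are cosmetic.
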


\medskip

Note that the time it takes for the bounds of Theorem \ref{opthm} to shrink below $\epsilon$ scales as  $O(n/\epsilon^2)$ with $n$ and $\epsilon$, which is an improvement of the previously best $O(n^4/\epsilon^2)$ scaling from \cite{RNV}. However, as we remarked in the introduction, Theorem \ref{opthm} is for a fixed undirected interconnection graph, whereas the previous analyses of the distributed subgradient method as in \cite{NO, RNV} worked for time-varying undirected graphs satisfying a long-term connectivity condition. Furthermore, Theorem \ref{opthm} requires all nodes knowing the same upper bound $U$ which is within a constant factor of the total number of nodes in the system, which was not required in \cite{NO, RNV}.

\subsection{Proof of linear convergence time} The remainder of this section is dedicated to the proof of Theorem \ref{opthm}. We briefly outline the underlying intuition. We imagine a fictitious centralized subgradient method which has instant access to all the subgradients $g_i(t)$ obtained at time $t$ and moves towards their average with an appropriately chosen step-size; we show that (i) this subgradient method makes progress towards an optimal solution, despite the fact that the subgradients $g_i(t)$ are all evaluated at different points (ii) the values  ${y}_i(t)$ maintained by each node are not too far from the state of this fictitious subgradient method. Putting (i) and (ii) together with the standard analysis of the subgradient method will imply that the values $y_i(t)$ move towards an optimal solution. More specifically,  the fictitious subgradient method is described by Lemma \ref{avglemma} and the bounds needed in part (ii) are proved in Corollary \ref{distbounds}.

We now start the formal proof of Theorem \ref{opthm}. We begin with some definitions. Our first definition and lemma rewrites the update of Eq. (\ref{optaccel}) in a particularly convenient way. 

\medskip

\begin{definition} Define \[ \q(t) = \left[ \begin{array}{c} \y(t) \\ \z(t) \end{array} \right] \] We will use the notation $B$ to denote the matrix \[  B= 
 \left[ \begin{array}{cc} \alpha M' & - (\alpha-1) M' \\ I  & 0 \end{array} \right] \]  where $\alpha  = 2 - 2/(9U+1)$; note that in the previous section this would have been denoted $B(1)$. Finally, 
we will use $\g(t)$ to denote the vector that stacks up the subgradients $g_i(t), i = 1, \ldots, n$. 
\end{definition}

\medskip

\begin{lemma}  We have that
\begin{equation} \q(t+1) = B \q(t) - \beta \left[ \begin{array}{c} \g(t) \\ \g(t) \end{array} \right] \label{delup} \end{equation} 
\end{lemma}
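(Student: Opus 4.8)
The plan is to unwind the three scalar update equations of Eq.~(\ref{optaccel}) into vector form and recognize the lazy Metropolis matrix $M'$ inside the first line. First I would observe that the consensus part of the first line is precisely multiplication by $M'$: since $M' = \frac{1}{2} I + \frac{1}{2} M$ and the off-diagonal entries of $M$ are $1/\max(d(i),d(j))$ on edges of $G$ (with the diagonal fixed by stochasticity), a one-line computation of $[M'\x]_i$ gives $x_i + \frac{1}{2}\sum_{j \in N(i)} (x_j - x_i)/\max(d(i),d(j))$. Hence the first line of Eq.~(\ref{optaccel}) reads $\y(t+1) = M' \x(t) - \beta \g(t)$, the second line reads $\z(t+1) = \y(t) - \beta \g(t)$, and the third line, with $\alpha = 2 - 2/(9U+1)$, reads $\x(t+1) = \alpha\, \y(t+1) - (\alpha-1)\, \z(t+1)$.

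Next I would establish the identity $\x(t) = \alpha\, \y(t) - (\alpha-1)\, \z(t)$ for all $t \ge 1$: for $t \ge 2$ this is just the third update line evaluated at time $t-1$, and for $t = 1$ it holds because $\y(1) = \z(1) = \x(1)$ and $\alpha - (\alpha - 1) = 1$. Substituting this into $\y(t+1) = M' \x(t) - \beta \g(t)$ yields $\y(t+1) = \alpha M' \y(t) - (\alpha-1) M' \z(t) - \beta \g(t)$, and stacking this with $\z(t+1) = \y(t) - \beta \g(t)$ gives
\[
\begin{bmatrix} \y(t+1) \\ \z(t+1) \end{bmatrix} = \begin{bmatrix} \alpha M' & -(\alpha-1) M' \\ I & 0 \end{bmatrix} \begin{bmatrix} \y(t) \\ \z(t) \end{bmatrix} - \beta \begin{bmatrix} \g(t) \\ \g(t) \end{bmatrix},
\]
which is exactly Eq.~(\ref{delup}) after recalling the definitions of $B$ and $\q(t)$.

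The argument is pure bookkeeping, so I do not anticipate a genuine obstacle; the only point that needs a moment's attention is the base case $t=1$ of the identity $\x(t) = \alpha\, \y(t) - (\alpha-1)\, \z(t)$, which rests on the initialization $\z(1) = \y(1) = \x(1)$ rather than on the third update line. Everything else follows by matching the vectorized update lines term by term against the two block rows of $B$.
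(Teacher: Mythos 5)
Your proposal is correct and follows essentially the same route as the paper: identify the first update line as $\y(t+1) = M'\x(t) - \beta\g(t)$, substitute $\x(t) = \alpha\y(t) - (\alpha-1)\z(t)$ from the third line, and stack with $\z(t+1) = \y(t) - \beta\g(t)$. Your explicit check of the base case $t=1$ via the initialization $\y(1)=\z(1)=\x(1)$ is a small point the paper leaves implicit, but it is not a different argument.
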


\begin{proof}  Indeed, \begin{eqnarray*} \y(t+1)  =  M' \x(t)  - \beta \g(t)
& = & M' \left( \y(t) + (\alpha - 1) (\y(t) - \z(t) ) \right) - \beta \g(t) \\
& = &  \alpha M' \y(t) - (\alpha-1) M' \z(t) - \beta \g(t)
\end{eqnarray*} while \[ \z(t+1) = \y(t) - \beta \g(t). \] Stacking up the last two equations proves the lemma. 
\end{proof}

\medskip

Thus we see that Eq. (\ref{delup}) is a very compact way of summarizing our protocol. Inspecting this equation, it is natural to suppose that analysis of the properties of multiplication 
by the matrix $B$ will turn out to be essential. Indeed, the following definitions and lemma in fact  describes the action of multiplication by $B$ on a  specific kind of vector in $\R^{2n}$. 

\medskip
 
\begin{definition} We define ${\cal U}(\gamma)$ be the set of vectors $\w \in \R^{2n}$ of the form $\w = \left[ \begin{array}{cc} \w_1 \\ \w_2 \end{array} \right]$ such that $\1^T \w_1 = \1^T \w_2 = \gamma$. 
\end{definition}

\medskip

\begin{definition} For simplicity of notation, we introduce the shorthand $\eta = 1 - \frac{1}{9U}$. 
\end{definition}

\medskip

\begin{lemma} $B$ maps each ${\cal U}(\gamma)$ into itself and, for each $t$, the entries of the three vectors $\x(t), \y(t), \z(t)$ have the same mean. Moreover, if $\uu$ is a vector in $\R^{2n}$ of the form $\uu = \left[ \begin{array}{c} \w \\ \w \end{array} \right]$ then assuming $U \geq n$ we have 
\begin{equation} \left| \left| [B^k \uu]_{1:n} - \frac{\1^T \w}{n} \1  \right| \right|_2^2 \leq 2 \eta^{k} \left| \left| \w  - \frac{\1^T \w}{n} \1 \right| \right|_2^2   \leq 2 \eta^{k} ||\w ||_2^2  . \label{decayb}   \end{equation} \label{intolemma} 
\end{lemma}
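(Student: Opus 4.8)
The plan is to handle the three assertions in order; the first two are essentially one-line computations and the decay bound (\ref{decayb}) carries the content. For the invariance of ${\cal U}(\gamma)$ under $B$: if $\w=[\w_1;\w_2]$ satisfies $\1^T\w_1=\1^T\w_2=\gamma$, then since $M'$ is symmetric and stochastic we have $\1^T M'=\1^T$, so the top block $\alpha M'\w_1-(\alpha-1)M'\w_2$ of $B\w$ has $\1$-sum $\alpha\gamma-(\alpha-1)\gamma=\gamma$, while the bottom block of $B\w$ is just $\w_1$, again of $\1$-sum $\gamma$; hence $B\w\in{\cal U}(\gamma)$.

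For the claim that $\x(t),\y(t),\z(t)$ share a common mean, I would induct on $t$. At $t=1$ all three equal $\x(1)$. Assuming $\1^T\y(t)=\1^T\z(t)$, the vector $\q(t)=[\y(t);\z(t)]$ lies in ${\cal U}(\gamma)$ for $\gamma=\1^T\y(t)$, so by the previous paragraph $B\q(t)\in{\cal U}(\gamma)$; by (\ref{delup}) the added term subtracts $\beta\1^T\g(t)$ from the $\1$-sum of each block, leaving $\1^T\y(t+1)=\1^T\z(t+1)$. Then $\x(t+1)=\y(t+1)+(\alpha-1)(\y(t+1)-\z(t+1))$ together with $\1^T(\y(t+1)-\z(t+1))=0$ gives $\1^T\x(t+1)=\1^T\y(t+1)$. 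Equal $\1$-sums of vectors in $\R^n$ is equal means.

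For the decay bound I would diagonalize exactly as in Lemma \ref{coordchange}. Let $Q$ be orthogonal with columns $v_1=(1/\sqrt n)\1,v_2,\dots,v_n$ the eigenvectors of $M'$ for $1=\lambda_1\ge\lambda_2\ge\cdots\ge\lambda_n\ge0$, and set $c_i=v_i^T\w$, so $\w=\sum_i c_iv_i$, $\frac{\1^T\w}{n}\1=c_1v_1$, and $\|\w-\frac{\1^T\w}{n}\1\|_2^2=\sum_{i=2}^nc_i^2\le\|\w\|_2^2$. Conjugating $B$ by ${\rm diag}(Q,Q)$ decouples its action on $\uu=[\w;\w]$ into, for each $i$, the planar iteration $B(\lambda_i)$ started at $[c_i;c_i]^T$; hence $Q^T[B^k\uu]_{1:n}$ has $i$-th entry equal to the first component of $(B(\lambda_i))^k[c_i;c_i]^T$, which — because the bottom block of $B\p$ is always the top block of $\p$, so that $[B^k\uu]_{1:n}=[B^{k+1}\uu]_{n+1:2n}$ — equals the second component of $(B(\lambda_i))^{k+1}[c_i;c_i]^T$. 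For $i=1$, $\lambda_1=1$ makes $[c_1;c_1]^T$ a fixed point of $B(1)$, so this entry is $c_1$ and cancels against the corresponding entry of $Q^T\big(\frac{\1^T\w}{n}\1\big)=c_1\,Q^Tv_1$ (whose only nonzero entry is the first, namely $c_1$); for $i\ge2$, Lemma \ref{nonprincipaldecay} with $r=c_i$ bounds its square by $2c_i^2\eta^{k+1}\le2c_i^2\eta^k$ (using $0<\eta<1$). Summing over $i\ge2$ and using orthogonality of $Q$ yields $\|[B^k\uu]_{1:n}-\frac{\1^T\w}{n}\1\|_2^2\le2\eta^k\sum_{i=2}^nc_i^2=2\eta^k\|\w-\frac{\1^T\w}{n}\1\|_2^2\le2\eta^k\|\w\|_2^2$, which is (\ref{decayb}).

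The only delicate point is the last step's bookkeeping: confirming that $B^k$ applied to the doubled vector $\uu=[\w;\w]$ decouples into the scalar recursions $B(\lambda_i)$, and aligning indices so that Lemma \ref{nonprincipaldecay} — which controls the \emph{second} coordinate of the $B(\lambda_i)$-iteration — delivers a bound on the \emph{first} $n$ coordinates of $B^k\uu$, with the $i=1$ mode peeled off separately since $B(1)$ does not contract. Everything else is routine.
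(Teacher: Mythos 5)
Your proof is correct and follows essentially the same route as the paper: the invariance of ${\cal U}(\gamma)$ and the equal-means claim are handled identically, and your decay bound is exactly the paper's argument with the citation unrolled. The paper simply observes that $B^k[\w;\w]$ is the trajectory of the consensus iteration of Eq.\ (\ref{alm}) started from $\y(0)=\y(1)=\w$ and invokes Theorem \ref{constheorem} directly, whereas you re-execute that theorem's proof (the $Q$-diagonalization of Lemma \ref{coordchange} plus Lemma \ref{nonprincipaldecay} mode by mode, with the $\lambda_1=1$ mode peeled off) — same mathematics, just spelled out.
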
 

\begin{proof}  To argue that $B$ maps ${\cal U}(\gamma)$ into itself observe that \[ B  \left[ \begin{array}{cc} \w_1 \\ \w_2 \end{array} \right] =  \left[ \begin{array}{cc} \alpha M' \w_1 - (\alpha - 1) M' \w_2 \\ \w_1 \end{array} \right] \] and the conclusion follows from the fact that $\1^T M' = \1^T$.
To argue that the vectors $\x(t), \y(t), \z(t)$ have the same mean observe that this is true at time $t=1$ since we initialize $\y(1)=\z(1)=\x(1)$. For $t>1$, observe that Eq. (\ref{delup}) implies $\y(t),\z(t)$ have the same mean due to the fact that $B$ maps each ${\cal U}(\gamma)$ into itself. Moreover, this further implies that $\x(t)$ has the same mean as $\y(t)$ by  the third line of Eq. (\ref{optaccel}). This concludes the proof that all $\x(t), \y(t), \z(t)$ have the same mean. 

Finally, the first inequality of Eq. (\ref{decayb}) follows from Theorem \ref{constheorem}. Indeed, observe that if the vector $\p(t) = \left[ \begin{array}{c} \y(t) \\ \y(t-1) \end{array} \right]$ is generated via Eq. (\ref{alm}) then $\p(t+1) = B \p(t)$. Recalling that Eq. (\ref{alm}) is preceeded by the initialization $\y(0)=\y(1)$, we may therefore view Theorem \ref{constheorem}  as providing a bound on the squared distance of $(\1^T \w/n) \1$ for the vectors $B^k \left[ \begin{array}{c} \w \\ \w \end{array} \right]$. The second inequality of Eq. (\ref{decayb}) is trivial.  \end{proof}

\medskip

Our next definition introduces notation for the average entry of the vector ${\bf x}(t)$. 

\medskip

\begin{definition} Recall that in the previous section, $\overline{x}$ referred to the average of the vector $\x(0)$, which was also the same as the average of all the vectors $\x(t), \y(t)$.  However, now that $\x(t), \y(t)$ are instead generated from Eq. (\ref{optaccel}) their average will not be constant over time. Consequently, we define 
\[ \overline{x}(t) = \frac{\1^T \x(t)}{n} \] By the previous lemma, $\overline{x}(t)$ is also the average of the entries of $\y(t)$. As before, we will use $\overline{x}$ to denote the {\em initial} average $\overline{x} = \overline{x}(1)$. 
\end{definition} 

\medskip

The following lemma, tracking how the average of the vector $\x(t)$ evolves with time, follows now by inspection of Eq. (\ref{optaccel}). 

\medskip

\begin{lemma} The quantity $\overline{x}(t)$ satisfies the following equation \begin{equation} \label{avgevol} \overline{x}(t+1) = \overline{x}(t) - \beta \frac{\sum_{i=1}^n g_i(t)}{n} \end{equation} \label{avglemma} \end{lemma}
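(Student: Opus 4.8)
The plan is to verify Eq. (\ref{avgevol}) by summing each of the three lines of Eq. (\ref{optaccel}) over $i = 1, \ldots, n$ and exploiting two structural facts already at our disposal: that the Metropolis-style averaging step conserves the sum of the coordinates, and that $\x(t)$, $\y(t)$, $\z(t)$ share a common mean by Lemma \ref{intolemma}.

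First I would sum the update for $y_i(t+1)$ over $i$. The term $\frac{1}{2}\sum_{j \in N(i)} \frac{x_j(t) - x_i(t)}{\max(d(i),d(j))}$, summed over all $i$, pairs the contribution of edge $\{i,j\}$ as seen from $i$ with its contribution as seen from $j$; since the denominator $\max(d(i),d(j))$ is symmetric, these are negatives of one another, so the whole double sum vanishes. (Equivalently, this is just the identity $\1^T M' = \1^T$, i.e. $M'$ is doubly stochastic.) Hence $\sum_{i=1}^n y_i(t+1) = \sum_{i=1}^n x_i(t) - \beta \sum_{i=1}^n g_i(t)$.

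Next I would sum the update for $x_i(t+1)$, obtaining $\sum_i x_i(t+1) = \sum_i y_i(t+1) + \left(1 - \frac{2}{9U+1}\right)\left(\sum_i y_i(t+1) - \sum_i z_i(t+1)\right)$. Summing the $z$-update gives $\sum_i z_i(t+1) = \sum_i y_i(t) - \beta \sum_i g_i(t)$, and by Lemma \ref{intolemma} the means of $\x(t)$ and $\y(t)$ coincide, so $\sum_i y_i(t) = \sum_i x_i(t)$; combining this with the previous paragraph yields $\sum_i z_i(t+1) = \sum_i y_i(t+1)$, so the extrapolation term drops out and $\sum_i x_i(t+1) = \sum_i y_i(t+1) = \sum_i x_i(t) - \beta \sum_i g_i(t)$. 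Dividing by $n$ and recalling $\overline{x}(t) = \1^T \x(t)/n$ gives Eq. (\ref{avgevol}).

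There is no substantive obstacle here; the only point requiring care is to avoid circularity — the fact that $\x(t)$ and $\y(t)$ have equal means is exactly what Lemma \ref{intolemma} supplies (it is proved there from $B$ mapping each ${\cal U}(\gamma)$ into itself together with the initialization $\y(1)=\z(1)=\x(1)$), so invoking it is legitimate. If one prefers a self-contained argument, the chain of identities $\sum_i x_i(t) = \sum_i y_i(t) = \sum_i z_i(t)$ can instead be re-derived inline by a one-line induction on $t$ using the same two observations (doubly stochasticity of $M'$ and the structure of the third line of Eq. (\ref{optaccel})).
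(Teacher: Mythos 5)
Your proposal is correct and fills in, by direct summation over $i$, exactly the "inspection" argument the paper gestures at: it uses the same two ingredients ($\1^T M' = \1^T$ and the equal-means fact from Lemma \ref{intolemma}, which is established beforehand, so there is no circularity). No substantive difference from the paper's intended proof.
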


\medskip

With these preliminary observations out of the way, we now turn to the main part of the analysis. We begin with a lemma which bounds how far the vector $\y(t)$ is from the span of the all-ones vector, i.e., bounds which
measure the ``disagreement'' among the $y_i(t)$. 

\medskip

\begin{lemma} Suppose $U \geq n$. Then, \begin{align*} ||\y(t) - \overline{x}(t) \1||_2  & \leq  \beta \left(  \sum_{j=1}^{t-1} \sqrt{2} \eta^{(j-1)/2} ||\g(t-j)||_2 \right) + \sqrt{2} \eta^{(t-1)/2}  ||\y(1) - \overline{x} \1 ||_2 \end{align*} \label{subgrdecay}
\end{lemma}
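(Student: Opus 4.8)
The plan is to treat Eq.~(\ref{delup}) as a discrete variation-of-constants recursion, unroll it, and then apply the decay estimate of Lemma~\ref{intolemma} term by term. Since the protocol initializes $\z(1)=\y(1)=\x(1)$, we have $\q(1) = \left[\begin{array}{c}\y(1)\\ \y(1)\end{array}\right]$, and iterating $\q(t+1) = B\q(t) - \beta\left[\begin{array}{c}\g(t)\\ \g(t)\end{array}\right]$ and reindexing the resulting sum gives
\[ \q(t) = B^{t-1}\q(1) \;-\; \beta\sum_{j=1}^{t-1} B^{j-1}\left[\begin{array}{c}\g(t-j)\\ \g(t-j)\end{array}\right]. \]
Restricting to the first $n$ coordinates and recalling $\y(t) = [\q(t)]_{1:n}$, this writes $\y(t)$ as a linear combination of vectors of the exact form $[B^{k}\uu]_{1:n}$ with $\uu = \left[\begin{array}{c}\w\\ \w\end{array}\right]$ to which the estimate (\ref{decayb}) of Lemma~\ref{intolemma} applies: the homogeneous term has exponent $k=t-1$ and $\w=\y(1)$, the $j$-th subgradient term has exponent $k=j-1$ and $\w=\g(t-j)$.

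Next I would pin down the mean of each of these pieces so that the difference $\y(t) - \overline{x}(t)\1$ splits cleanly. Because $B$ maps each ${\cal U}(\gamma)$ into itself (Lemma~\ref{intolemma}) and $\q(1)\in{\cal U}(\1^T\y(1))$ with $\1^T\y(1)/n=\overline{x}$, the first block of $B^{t-1}\q(1)$ has mean $\overline{x}$; likewise the first block of $B^{j-1}\left[\begin{array}{c}\g(t-j)\\ \g(t-j)\end{array}\right]$ has mean $\1^T\g(t-j)/n$. Applying $\1^T$ to the displayed identity therefore recovers $\overline{x}(t)=\overline{x}-\beta\sum_{j=1}^{t-1}\1^T\g(t-j)/n$ (consistent with iterating Lemma~\ref{avglemma}), and subtracting $\overline{x}(t)\1$ from $\y(t)$ expresses $\y(t)-\overline{x}(t)\1$ as the sum of the ``centered'' vector $[B^{t-1}\q(1)]_{1:n}-\overline{x}\1$ and, with a factor $-\beta$, the centered vectors $[B^{j-1}(\g(t-j);\g(t-j))]_{1:n}-(\1^T\g(t-j)/n)\1$ for $j=1,\dots,t-1$.

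Finally I would take norms. The triangle inequality bounds $\|\y(t)-\overline{x}(t)\1\|_2$ by the sum of the norms of these pieces, and each is controlled by (\ref{decayb}): for the homogeneous term I use the \emph{sharper} inequality in (\ref{decayb}) with $\w=\y(1)$ (noting $\1^T\y(1)/n=\overline{x}$), giving $\sqrt2\,\eta^{(t-1)/2}\|\y(1)-\overline{x}\1\|_2$; for each subgradient term I use the \emph{cruder} bound $\|[B^{j-1}\uu]_{1:n}-(\1^T\w/n)\1\|_2\le\sqrt2\,\eta^{(j-1)/2}\|\w\|_2$ with $\w=\g(t-j)$, giving $\sqrt2\,\eta^{(j-1)/2}\|\g(t-j)\|_2$. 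Adding these is exactly the claimed inequality. I do not expect a genuine obstacle; the only thing requiring care is the bookkeeping — matching the summation index $j$ with the exponent $j-1$ and the subgradient time $t-j$, and deliberately using the strong form of (\ref{decayb}) for the homogeneous term but the weak form (which produces $\|\g(t-j)\|_2$ rather than a centered norm) for the subgradient terms, so that the bound matches the statement of the lemma verbatim.
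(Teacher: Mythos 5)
Your proposal is correct and follows essentially the same route as the paper: unroll Eq.~(\ref{delup}) to express $\q(t)$ as $B^{t-1}\q(1)$ plus a sum of $\beta B^{j-1}$ applied to the stacked subgradient vectors, use the ${\cal U}(\gamma)$-invariance of $B$ to identify the mean of each piece and center the decomposition, then apply the triangle inequality together with Eq.~(\ref{decayb}) of Lemma~\ref{intolemma} term by term. Your explicit bookkeeping of which form of (\ref{decayb}) to use for the homogeneous versus the subgradient terms is exactly what the paper's final sentence leaves implicit.
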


\begin{proof}  Let us introduce the notation $\hg(t) =  -\left[ \begin{array}{c} \g(t) \\ \g(t) \end{array} \right]$ and $\hh =  \left[ \begin{array}{c} \1 \\ \1 \end{array} \right]$.  As a consequence of Eq. (\ref{delup}) we then have 
\[ \q(t) =  \beta \hg(t-1) + \beta B \hg(t-2) + \cdots + \beta B^{t-2} \hg(1) + B^{t-1} \q(1)  \]
and  using the fact that  $B$ maps each ${\cal U}(\gamma)$ into itself as proved in Lemma \ref{intolemma}, we have 
\[ \overline{x}(t) \hh = \beta \sum_{j=1}^{t-1}  \frac{-\1^T \g(t-j)}{n} \hh +  \overline{x}(1) \hh \] Therefore \begin{align*} \q(t) - \overline{x}(t) \hh & = \beta \sum_{j=1}^{t-1} B^{j-1} \left( \hg(t-j) - \frac{-\1^T \g(t-j)}{n} \hh \right) + B^{t-1} \left(\q(1) - \overline{x}(1) \hh \right), \end{align*} where we used that $B \hh = \hh$. By taking the first $n$ entries of both sides of this
equation and applying Lemma \ref{intolemma}, we obtain the current lemma.  
\end{proof}

\medskip

The next corollary makes the bounds of Lemma \ref{subgrdecay} more explicit by explicitly bounding some of the sums appearing in the lemma. 

\medskip

\begin{corollary} Suppose $U \geq n$ and suppose $|g_i(t)| \leq L$ for all $i=1, \ldots, n$ and $t=1, 2, \ldots$. We then have that  \begin{align} \label{firstcor} ||\y(t) - \overline{x}(t) \1||_1 & \leq  \frac{1}{1-\sqrt{\eta}} \sqrt{2} \beta L n  + \sqrt{2n} \eta^{(t-1)/2} ||\y(1) - \overline{x} \1||_2 \end{align} and \begin{small} \begin{align} \label{secondcor}  \sum_{k=1}^t ||\y(k) - \overline{x}(k) \1||_1  & \leq \frac{1}{1-\sqrt{\eta}}  \beta  \sqrt{2} L n t  + \frac{1}{1-\sqrt{\eta}} \sqrt{2n} ||\y(1) - \overline{x} \1||_2  \end{align} \end{small} and \begin{small}
 \begin{align} \label{hatdiff} ||\widehat{\y}(t) - \widehat{\overline{x}}(t) \1||_1 & \leq  \frac{1}{1-\sqrt{\eta}}  \beta  \sqrt{2} L n + \frac{1}{t}  \frac{1}{1-\sqrt{\eta}} \sqrt{2n} ||\y(1) - \overline{x} \1||_2    \end{align} \end{small}
\label{distbounds}
\end{corollary}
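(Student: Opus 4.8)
The plan is to derive all three inequalities of Corollary \ref{distbounds} directly from Lemma \ref{subgrdecay} by three elementary steps: bound the subgradient norms using $|g_i(t)| \le L$, sum the resulting geometric series, and convert the two-norm estimate into a one-norm estimate.

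First I would observe that $|g_i(t)| \le L$ for every $i$ gives $||\g(s)||_2 \le L\sqrt{n}$ for every $s$, so the sum in Lemma \ref{subgrdecay} is at most $\sqrt{2}\,L\sqrt{n}\sum_{j=1}^{t-1}\eta^{(j-1)/2}$. Since $U \ge n \ge 1$ we have $\eta = 1 - 1/(9U) \in (0,1)$, hence $\sqrt{\eta} < 1$ and $\sum_{j=1}^{t-1}\eta^{(j-1)/2} = \sum_{j=0}^{t-2}(\sqrt{\eta})^{j} \le 1/(1-\sqrt{\eta})$. Substituting back yields
\[ ||\y(t) - \overline{x}(t)\1||_2 \le \frac{\sqrt{2}\,\beta L\sqrt{n}}{1-\sqrt{\eta}} + \sqrt{2}\,\eta^{(t-1)/2}\,||\y(1)-\overline{x}\1||_2. \]
Multiplying through by $\sqrt{n}$ and using the elementary inequality $||\vv||_1 \le \sqrt{n}\,||\vv||_2$ valid for any $\vv \in \R^n$ produces exactly Eq. (\ref{firstcor}).

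For Eq. (\ref{secondcor}) I would sum the bound (\ref{firstcor}) over $k=1,\ldots,t$: the constant term contributes $t$ copies of $\frac{1}{1-\sqrt{\eta}}\sqrt{2}\beta Ln$, while the decaying term contributes $\sqrt{2n}\,||\y(1)-\overline{x}\1||_2\sum_{k=1}^{t}\eta^{(k-1)/2}$, which is again at most $\frac{1}{1-\sqrt{\eta}}\sqrt{2n}\,||\y(1)-\overline{x}\1||_2$ by the same geometric-series estimate. For Eq. (\ref{hatdiff}) I would note that $\widehat{\y}(t) - \widehat{\overline{x}}(t)\1 = \frac{1}{t}\sum_{k=1}^t\bigl(\y(k) - \overline{x}(k)\1\bigr)$, using that $\widehat{\overline{x}}(t)$ is the running average of the $\overline{x}(k)$ and that each $\overline{x}(k)$ is the average of the entries of $\y(k)$ by Lemma \ref{intolemma}; then the triangle inequality for $||\cdot||_1$ together with dividing Eq. (\ref{secondcor}) by $t$ gives the claim.

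There is no genuinely hard step here — the analytic content is entirely in Lemma \ref{subgrdecay}, and what remains is a geometric sum plus the norm comparison $||\vv||_1 \le \sqrt{n}||\vv||_2$. The one point that warrants care is that the geometric series has ratio $\sqrt{\eta}$, not $\eta$, so the factor that appears is $1/(1-\sqrt{\eta})$ rather than $1/(1-\eta)=9U$; correspondingly one should record that $1-\sqrt{\eta}>0$, which is immediate from $\eta<1$.
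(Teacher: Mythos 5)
Your proof is correct and follows essentially the same route as the paper: apply the subgradient bound $\|\g(s)\|_2 \le L\sqrt{n}$ inside Lemma \ref{subgrdecay}, sum the geometric series in $\sqrt{\eta}$, use $\|\vv\|_1 \le \sqrt{n}\|\vv\|_2$, then sum over $k$ and divide by $t$ for the averaged bound. The only (immaterial) difference is that the paper converts to the one-norm before bounding the subgradient terms, whereas you do it afterward.
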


\begin{proof}  First, since,  $||\y(t) - \overline{x}(t) \1||_1 \leq \sqrt{n} ||\y(t) - \overline{x}(t) \1||_2$, we have that 
\begin{align*} ||\y(t) - \overline{x}(t) \1||_1  & \leq  \left(  \beta \sqrt{n} \sum_{j=1}^{t-1} \sqrt{2} \eta^{(j-1)/2} ||\g(t-j)||_2 \right)  + \sqrt{2n} \eta^{(t-1)/2}  ||\y(1) - \overline{x} \1 ||_2  \end{align*} \ Moreover, since each component of $\g(t-j)$ is at most $L$ in absolute value by assumption, we have that $||\g(t-j)||_2 \leq L \sqrt{n}$. Therefore 
\begin{align} \label{midpoint}  ||\y(t) - \overline{x}(t) \1||_1  & \leq  \left(  \beta L n  \sum_{j=1}^{t-1} \sqrt{2} \eta^{(j-1)/2} \right)  + \sqrt{2n} \eta^{(t-1)/2}  ||\y(1) - \overline{x} \1 ||_2  \end{align}   Eq. (\ref{firstcor}) now follows from using the formula for a geometric sum.  Now replacing $t$ by $k$ in Eq. (\ref{firstcor}) and summing up $k$ from $1$ to $t$, we immediately obtain Eq. (\ref{secondcor}). Finally, using 
\begin{eqnarray*} ||\widehat{\y}(t) - \widehat{\overline{x}}(t) \1||_1 & =  & \left| \left| \frac{\y(1) + \cdots + \y(t)}{t} - \frac{\overline{x}(1)  + \cdots + \overline{x}(t) }{t} \1 \right| \right|_1 
 \leq  \frac{1}{t} \sum_{k=1}^t ||\y(k) - \overline{x}(k) \1 ||_1 
\end{eqnarray*}  we obtain Eq. (\ref{hatdiff}). 
\end{proof}

\medskip

%

The following lemma is a key point in our analysis: we bound the error of our protocol after $T$ steps in terms of the some standard terms appearing in the usual analysis of the subgradient method as well as the ``disagreement'' among entries of the vectors $\y(t)$ and the averaged vector $\widehat{\y}(T)$. 

\medskip

\begin{lemma} Suppose all the subgradients of all $f_i(\cdot)$ are upper bounded by $L$ in absolute value. Then, for any $w \in \R$, we have that 
\begin{eqnarray*} \left( \frac{1}{n} \sum_{i=1}^n f_i(\widehat{y}_i(T)) \right) - f(w) &  \leq &  \frac{ (\overline{x}(1) - w)^2}{2 \beta T}  + \frac{ \beta  L^2  }{2}   +  \frac{2 L}{T n} \sum_{j=1}^T ||\overline{x}(j) \1 - \y(j)||_1 + \frac{L}{n}  ||\widehat{\y}(T) - \widehat{\overline{x}}(T) \1||_1 
\end{eqnarray*} \label{avgperf}
\end{lemma}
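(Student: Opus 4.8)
The plan is to treat the average trajectory $\overline{x}(t)$ as the iterate of a centralized (perturbed) subgradient method and run the textbook subgradient analysis on it, charging the discrepancy between this fictitious iterate and the true node states $y_i(t)$ to the disagreement terms $||\overline{x}(t)\1 - \y(t)||_1$. Throughout I use that boundedness of the subgradients of $f_i$ by $L$ makes each $f_i$ globally $L$-Lipschitz (immediate from the subgradient inequality applied at two points on $\R$).

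First, by Lemma \ref{avglemma}, writing $\overline{g}(t) := (1/n)\sum_{i=1}^n g_i(t)$, we have $\overline{x}(t+1) = \overline{x}(t) - \beta\overline{g}(t)$. Fix $w \in \R$ and expand
\[ (\overline{x}(t+1)-w)^2 = (\overline{x}(t)-w)^2 - 2\beta\,\overline{g}(t)(\overline{x}(t)-w) + \beta^2 \overline{g}(t)^2 . \]
Since $|g_i(t)|\le L$ for every $i$ we get $\overline{g}(t)^2 \le L^2$, which accounts for the $\beta L^2/2$ term. The crux is to lower bound the cross term by something close to $f(\overline{x}(t)) - f(w)$. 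For each $i$, the subgradient inequality for $f_i$ at $y_i(t)$ gives $g_i(t)(y_i(t)-w) \ge f_i(y_i(t)) - f_i(w)$; writing $g_i(t)(\overline{x}(t)-w) = g_i(t)(\overline{x}(t)-y_i(t)) + g_i(t)(y_i(t)-w)$, bounding the first summand by $-L|\overline{x}(t)-y_i(t)|$ and using $f_i(y_i(t)) \ge f_i(\overline{x}(t)) - L|y_i(t)-\overline{x}(t)|$ on the second, I obtain
\[ g_i(t)(\overline{x}(t)-w) \ge f_i(\overline{x}(t)) - f_i(w) - 2L|\overline{x}(t)-y_i(t)| . \]
Averaging over $i$ yields $\overline{g}(t)(\overline{x}(t)-w) \ge f(\overline{x}(t)) - f(w) - (2L/n)||\overline{x}(t)\1 - \y(t)||_1$.

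Substituting this back, rearranging, summing over $t=1,\ldots,T$, discarding the nonnegative leftover $(\overline{x}(T+1)-w)^2$, and dividing by $2\beta T$ gives the time-averaged bound
\[ \frac{1}{T}\sum_{t=1}^T \bigl( f(\overline{x}(t)) - f(w) \bigr) \le \frac{(\overline{x}(1)-w)^2}{2\beta T} + \frac{\beta L^2}{2} + \frac{2L}{Tn}\sum_{t=1}^T ||\overline{x}(t)\1 - \y(t)||_1 . \]
It then remains to convert the left side into $(1/n)\sum_i f_i(\widehat{y}_i(T)) - f(w)$ by two convexity steps: Jensen's inequality for $f$ gives $f(\widehat{\overline{x}}(T)) \le (1/T)\sum_{t=1}^T f(\overline{x}(t))$, so the left side above is at least $f(\widehat{\overline{x}}(T)) - f(w)$; and $L$-Lipschitzness of each $f_i$ gives $f_i(\widehat{y}_i(T)) \le f_i(\widehat{\overline{x}}(T)) + L|\widehat{y}_i(T)-\widehat{\overline{x}}(T)|$, so averaging over $i$ and using $f = (1/n)\sum_i f_i$ replaces $f(\widehat{\overline{x}}(T))$ by $(1/n)\sum_i f_i(\widehat{y}_i(T))$ at the cost of the last term $(L/n)||\widehat{\y}(T)-\widehat{\overline{x}}(T)\1||_1$. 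Chaining these with the displayed inequality is exactly the claim. The one step that genuinely needs care — and the conceptual heart of the whole argument — is the cross-term estimate, because the subgradients $g_i(t)$ are evaluated at the heterogeneous points $y_i(t)$ instead of at the common point $\overline{x}(t)$; this is precisely where the disagreement terms enter, and controlling them is what Corollary \ref{distbounds} is for when this lemma is later combined with the rest of the proof. Everything else is routine subgradient-method bookkeeping.
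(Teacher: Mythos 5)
Your proposal is correct and follows essentially the same route as the paper's proof: the identical cross-term estimate $g_i(t)(\overline{x}(t)-w) \ge f_i(\overline{x}(t)) - f_i(w) - 2L|\overline{x}(t)-y_i(t)|$, the same telescoping sum over $t$, Jensen's inequality to pass to $f(\widehat{\overline{x}}(T))$, and the final Lipschitz step to replace $f(\widehat{\overline{x}}(T))$ with $(1/n)\sum_i f_i(\widehat{y}_i(T))$. No gaps.
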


\begin{proof} Indeed, by Eq. (\ref{avgevol}) we have that 
\begin{eqnarray*} \left( \overline{x}(t+1) - w \right)^2  & = &   (\overline{x}(t)  - w)^2 + \frac{\beta^2}{n^2} \left( \sum_{i=1}^n g_i(t) \right)^2 - 2  \frac{\beta}{n} \left( \sum_{i=1}^n g_i(t) \right) (\overline{x}(t) - w) 
\end{eqnarray*}
We also have that 
\begin{eqnarray*} g_i(t)( \overline{x}(t) - w) & = & g_i(t) (y_i(t) - w)  + g_i(t) ( \overline{x}(t) - y_i(t) ) \\ 
& \geq & f_i(y_i(t)) - f_i(w) - L |\overline{x}(t) - y_i(t)|  \\
& = & f_i(\overline{x}(t)) - f_i(w) + f_i(y_i(t))  - f_i (\overline{x}(t)) - L |\overline{x}(t) - y_i(t)| \\ 
& \geq & f_i(\overline{x}(t)) - f_i(w) - 2 L |\overline{x}(t) - y_i(t)| 
\end{eqnarray*} where the final inequality used that the subgradients of $f_i$ are bounded by $L$ in absolute value.

Therefore, 
\begin{eqnarray*} ( \overline{x}(t+1) - w)^2 & \leq &  (\overline{x}(t)  - w)^2 + \frac{\beta^2}{n^2} n^2 L^2  - 2  \beta (f(\overline{x}(t)) - f(w)) +  \frac{4L\beta}{n} \sum_{i=1}^n | \overline{x}(t) - y_i(t) | 
\end{eqnarray*}  which implies
\begin{eqnarray*}  2  \beta (f(\overline{x}(t)) - f(w))  \leq  (\overline{x}(t)  - w)^2  -  ( \overline{x}(t+1) - w)^2  + \beta^2 L^2   +  \frac{4L\beta}{n} ||\overline{x}(t) \1 - \y(t)||_1 \end{eqnarray*}
and summing up the left-hand side, 
\[ 2 \beta \sum_{j=1}^{T}  f(\overline{x}(j)) - f(w) \leq (\overline{x}(1) - w)^2 + \beta^2 L^2 T  +  \frac{4L \beta}{n} \sum_{j=1}^T ||\overline{x}(j) \1 - \y(j)||_1   \] which in turn implies 
\[ \frac{1}{T} \sum_{j=1}^{T}  f(\overline{x}(j)) - f(w) \leq \frac{ (\overline{x}(1) - w)^2}{2 \beta T}  + \frac{ \beta  L^2  }{2}  +  \frac{2 L}{T n} \sum_{j=1}^T ||\overline{x}(j) \1 - \y(j)||_1   \]  Thus
\begin{equation} \label{finaleq1} f(\widehat{\overline{x}}(T)) - f(w) \leq  \frac{ (\overline{x}(1) - w)^2}{2 \beta T}  + \frac{ \beta  L^2  }{2}  +  \frac{2 L}{T n} \sum_{j=1}^T ||\overline{x}(j) \1 - \y(j)||_1.  \end{equation} Now using once again the fact that the subgradients of each $f_i(\theta)$ are upper  bounded by $L$ in absolute value, 
\begin{eqnarray*} \left( \frac{1}{n} \sum_{i=1}^n f_i(\widehat{y}_i(T)) \right) - f(w)  & = & \frac{1}{n}  \left( \sum_{i=1}^n f_i(\widehat{y}_i(T)) - f_i(\widehat{\overline{x}}(T)) \right)  + \frac{1}{n} \left(  \sum_{i=1}^n f_i(\widehat{\overline{x}}(T)) \right) - f(w)\nonumber  \\
& \leq & \frac{L}{n} ||\widehat{\y}(T) - \widehat{\overline{x}}(T) \1||_1 + f(\widehat{\overline{x}}(T)) - f(w) \label{finaleq2}
\end{eqnarray*} Now putting together Eq. (\ref{finaleq1}) and Eq. (\ref{finaleq2}) proves the lemma.  
\end{proof} 

\medskip

With all the previous lemmas established, it now remains to put all the pieces together and complete the analysis. Indeed Lemma \ref{avgperf}, just established, bounds the error of our protocol in terms of the disagreements among entries of  the vector $\y(t), t = 1, \ldots, T$ and $\widehat{\y}(T)$. These quantities, in turn, can be bounded using the previous Corollary \ref{distbounds}. Putting these inequalities together and 
choosing correctly the step-size $\beta$ will lead to the proof of the theorem. 

\medskip

\begin{proof}[Proof of Theorem \ref{opthm}] Indeed, plugging the bounds of Corollary \ref{distbounds} into Lemma \ref{avgperf} we obtain 

\begin{eqnarray*} \left( \frac{1}{n} \sum_{i=1}^n f_i(\widehat{y}_i(T)) \right) - f(w) & \leq & \frac{ (\overline{x}(1) - w)^2}{2 \beta T}  + \frac{ \beta  L^2  }{2}   +  \frac{2 L}{T n} \left(  \frac{1}{1-\sqrt{\eta}}  \beta  \sqrt{2} L n T  + \frac{1}{1-\sqrt{\eta}} \sqrt{2n} ||\y(1) - \overline{x} \1||_2 \right) \\ &&  + \frac{L}{n}  \left(\frac{1}{1-\sqrt{\eta}}  \beta  \sqrt{2} L n  + \frac{1}{T}  \frac{ \sqrt{2n} }{1-\sqrt{\eta}}||\y(1) - \overline{x} \1||_2   \right)
\end{eqnarray*}  We may simplify this as 

\begin{eqnarray*} \left( \frac{1}{n} \sum_{i=1}^n f_i(\widehat{y}_i(T)) \right) - f(w) & \leq & \frac{ (\overline{x}(1) - w)^2}{2 \beta T}  + \frac{ \beta  L^2  }{2}  +  \frac{2 \sqrt{2} L^2 \beta}{1 - \sqrt{\eta}}   + \frac{2 \sqrt{2} L}{T \sqrt{n} (1-\sqrt{\eta})}||\y(1) - \overline{x} \1||_2 \nonumber \\ &&  +  \frac{\sqrt{2} L^2 \beta}{1-\sqrt{\eta}}   +  \frac{L \sqrt{2}}{T \sqrt{n} (1-\sqrt{\eta})} ||\y(1) - \overline{x} \1||_2  \label{istep}
\end{eqnarray*} 
 Now since $\sqrt{1 - \frac{1}{9U}} \leq 1 - \frac{1}{18U}$ we have that the $1/(1-\sqrt{\eta})$ terms in the above equation may be upper bounded as
\[ \frac{1}{1-\sqrt{\eta}} =  \frac{1}{1 - \sqrt{1 - 1/(9U)}} \leq \frac{1}{1 - (1 - 1/(18U))} = 18 U \] 
Furthermore, choose any $w^* \in {\cal W}^*$ and $\beta = 1/(L \sqrt{U T})$ so that $$\beta L^2 = \frac{L}{\sqrt{UT}}, ~~~~~~\frac{\beta L^2}{1 - \sqrt{\eta}} \leq \frac{18 L \sqrt{U}}{\sqrt{T}}, ~~~~~~\beta T = \frac{\sqrt{T}}{L \sqrt{U}},$$ Plugging all this in we obtain  
\begin{eqnarray*} \left( \frac{1}{n} \sum_{i=1}^n f_i(\widehat{y}_i(T)) \right) - f(w^*) & \leq  & \frac{ L \sqrt{U} (\overline{x}(1) - w^*)^2}{2 \sqrt{T}} + \frac{  L  }{2 \sqrt{T} \sqrt{U}}  + 36 \sqrt{2} L \sqrt{\frac{U}{T}} + \frac{36 \sqrt{2} L U}{T \sqrt{n} } ||\y(1) - \overline{x} \1||_2 \nonumber  \\ &&+ \frac{18 \sqrt{2} L \sqrt{U}}{\sqrt{T}}  +  \frac{18 \sqrt{2} L U }{\sqrt{n} T} ||\y(1) - \overline{x} \1||_2  \label{exacterr} \end{eqnarray*} 
This proves the first equation of Theorem \ref{opthm}. To prove the second equation of the theorem, let us plug in the our bound for $1/(1-\sqrt{\eta})$ into Eq. (\ref{hatdiff}): \[
||\widehat{\y}(t) - \widehat{\overline{x}}(t) \1||_1 \leq 18 U  \beta  \sqrt{2} L n  + \frac{1}{t} \left( 18 U \sqrt{2n} ||\y(1) - \overline{x} \1||_2  \right) \] Now plugging in $t=T, \beta = 1/(L \sqrt{UT})$ we obtain 
\[ ||\widehat{\y}(T) - \widehat{\overline{x}}(T) \1||_1 \leq \frac{18 \sqrt{2} n \sqrt{U}}{\sqrt{T}} + \frac{ 18 U \sqrt{2n} }{T}  ||\y(1) - \overline{x} \1||_2 \] which implies \begin{small}
\begin{equation} \label{exactdisp} \frac{1}{n} ||\widehat{\y}(T) - \widehat{\overline{x}}(T) \1 ||_1 \leq  18 \sqrt{2} \frac{\sqrt{U}}{\sqrt{T}} + \frac{18 \sqrt{2} U }{\sqrt{n} T} ||\y(1) - \overline{x} \1||_2 \end{equation} \end{small} and now observing that the median of a set of points minimizes the sum of the $l^1$ distances to those points, this implies the second equation of the theorem. \end{proof}

%
%
%

\noindent {\bf Remark III.A:} One might wish for a stronger bound on the error than our quantity  ${\rm Err}(\theta_1, \ldots, \theta_n)$; 
for example, one might wish to show that every node $i$ holds some estimate $\theta_i$ which satisfies
$f(\theta_i) - f(w^*) = O(L \sqrt{n/T})$. 

We remark here that is not hard to modify our protocol to satisfy this condition. Indeed, simply observe that the quantity
$\widehat{\overline{x}}(t)$ satisfies this bound; indeed, our proof proceeds by first bounding 
$f(\widehat{\overline{x}}(t)) - f(w^*)$ and (see Eq. (\ref{finaleq1})) and then arguing that this implies the bound on
${\rm Err}(\widehat{y}_1(T), \ldots, \widehat{y}_n(T))$ by a perturbation argument. Since $\widehat{\overline{T}}$ is the 
average of the values of $\widehat{y}_1(T), \ldots, \widehat{y}_n(T)$, there is a natural two-step procedure the nodes
can implement:  first they can use the protocol of Eq. (\ref{optaccel}) to compute $\widehat{y}_1(T), \ldots, \widehat{y}_n(T)$, and then they 
can average these values using Eq. (\ref{alm}). 

In order for this to result in an $O(L \sqrt{n/T})$ bound on the distance from optimality at each node, each node needs  to obtain an additive $O(\sqrt{n/T})$ approximation to $\widehat{\overline{x}}(t)$. Since by Eq. (\ref{exactdisp}) each node is at most $n^{1.5}/T^{1/2}$ from $\widehat{\overline{x}}(t)$ after the completion of the first stage of the above protocol, this means 
that the averaging protocol of Eq. (\ref{alm}) needs to run for an
$O(n \log n)$  steps during the second stage. Thus this protocol achieves $f(\theta_i) - f(w^*) = O(L \sqrt{n/T})$ for estimates $\theta_i$ at each node
after $T+O(n \log n)$ iterations. 

\bigskip

\noindent {\bf Remark III.B:} We note that there is an alternative and very simple approach to distributed optimization. After initializing their states $\theta_i(1)$ at some identical initial state $\theta_i(1) = \theta$, the nodes 
 pick an error tolerance $\epsilon$ and perform a multi-stage procedure: each node computes the
subgradient $g_i(k)$ of $f_i(\cdot)$ at the the point $\theta_i(k)$; these subgradients are then averaged using an average consensus protocol until each node has the averaged estimate $g_{i}^{\rm ave}(k)$ which differs from the true average  $(1/n) \sum_{i=1}^n g_i(i)$ by at most $\epsilon$; finally, each node updates as $\theta_i(k+1) = \theta_i(k) - \frac{1}{\sqrt{k}} g_i^{\rm ave}(k)$. This two-stage procedure of computing subgradients and approximately averaging them is then repeated for $K$ 
iterations. 

It is not too hard to see that this can be made to work; for example, as long as nodes know an upper bound $U$ on the total number of nodes, they can use the average consensus protocol of Eq. (\ref{alm}) to average the subgradients and they can use
Theorem \ref{constheorem} to decide the number of consensus iterations to perform until all $g_i^{\rm ave}(k)$ differ by at most $\epsilon$ from the true average subgradient. {\em Nevertheless, the scheme of Eq. (\ref{optaccel}) has three  distinct advantages relative to 
this two-stage approach. }

First,  the two-stage protocol is considerably less elegant than any approach which avoids the use of inner/outer loops. 

Secondly, the convergence rates obtained by the two-stage scheme are a logarithmic factor worse compared to the convergence rates from Theorem \ref{opthm}. Indeed, observe that choosing $\epsilon \leq 1/K^{1.5}$ ensures that all nodes are always within an $1/\sqrt{K}$ of each other.  This, however, means that the nodes need to average the subgradients for $O(n \log (L\sqrt{n}K^{1.5}))$ iterations using the average consensus protocol of Eq. (\ref{alm}), since initially the vector that stacks up the subgradients has $2$-norm at most $L\sqrt{n}$. A standard analysis of the subgradient method with errors along the lines of Lemma \ref{avgperf} implies that an error $O(L/\sqrt{K})$ is achieved at each node after $O(K n \log (L\sqrt{n}K^{1.5}))$ iterations, which is  worse than the bound of Theorem \ref{opthm} by a logarithmic factor. Unlike the protocol of Eq. (\ref{optaccel}), the protocol we have just sketched does not achieve the optimal decay $O(1/\sqrt{T})$ with the
number of iterations $T$. 

Finally, although we have not made a point of it here, the scheme we have described in this paper will perform well when the underlying graph is well-connected, even if there is no knowledge of the graph. For example, if the underlying graph is an expander with spectral gap bounded away from $1$, we will have that ${\rm Err}(\cdot, \cdot, \ldots, \cdot)$ will decay as $O(1/\sqrt{T})$ with no dependence on $n$ or $U$. By contrast, the two-stage procedure will need to have an inner loop of size at least $O(U \log (1/\epsilon))$ to accurately average the subgradients (since the nodes do not know the underlying graph is an expander), so that final convergence time will still scale at least linearly in $U$.

\subsection{Two-dimensional grids and geometric random graphs\label{sec:opt:geom}} We can utilize Corollary \ref{fasterprop}, proved in the last section, to show that on two-dimensional grids and geometric random graphs, convergence time is essentially proportional to {\em square root} of the number of nodes.

\medskip

\begin{proposition} Consider the protocol 
\begin{eqnarray*} y_i(t+1) & = & x_i(t) +  \frac{1}{2} \sum_{j \in N(i)} \frac{x_j(t) - x_i(t)}{\max(d(i), d(j))} - \beta g_i(t) \nonumber \\ 
z_i(t+1) & = & y_i(t) - \beta g_i(t) \nonumber \\ 
x_i(t+1) & = & y_i(t+1) + \left(  1 - \gamma \right) \left( y_i(t+1) - z_i(t+1) \right) 
\end{eqnarray*} with the initialization $\y_i(1) = \z_i(1) = \x_i(1)$. Under the same assumptions as Theorem \ref{opthm}, there 
is a choice of $\gamma = \Theta \left( \frac{1}{\sqrt{U \log U}} \right)$ such that with step-size $\beta = \frac{1}{L \sqrt{T} (U \log U)^{1/4}}$ we 
have that if $G$ is a two-dimensional grid, 
\begin{eqnarray*}
{\rm Disp}(\widehat{y}_1(T), \ldots, \widehat{y}_n(T)) & = & O \left(  \frac{(n \log n)^{1/4}}{T^{1/2}} \left( \frac{||\y(1) - \overline{x} \1||_2}{T^{1/2}}  \left( \frac{ \log n}{n} \right)^{1/4}  + 1  \right) \right)  \\
 {\rm Err}(\widehat{y}_1(T), \ldots, \widehat{y}_n(T)) & = & O \left( L \frac{(n \log n)^{1/4}}{T^{1/2}} \left( \frac{||\y(1) - \overline{x} \1||_2}{T^{1/2}}  \left( \frac{ \log n}{n} \right)^{1/4} + 1 + (\overline{x} - w^*)^2 \right) \right). 
\end{eqnarray*}  If $G$ is a geometric random graph with $r^2 \geq \frac{8c \log n}{n}$ where $c>1$, then 
the same equations hold with high probability. 
\end{proposition}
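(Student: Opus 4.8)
The plan is to reprise the proof of Theorem~\ref{opthm} almost verbatim, the only substantive change being that every appeal to the consensus estimate of Theorem~\ref{constheorem} is replaced by the stronger estimate of Proposition~\ref{fasterprop}. Concretely, throughout the argument the decay factor $\eta = 1 - 1/(9U)$ becomes $\eta = 1 - \gamma$ with $\gamma = \Theta\bigl(1/\sqrt{U\log U}\bigr)$, and the extrapolation coefficient $1 - 2/(9U+1)$ becomes $1-\gamma$, so that the matrix $B$ introduced before Lemma~\ref{intolemma} is now built with $\alpha = 2-\gamma \in (1,2)$. The first thing I would verify is that the preparatory material of Section~\ref{sec:opt} is insensitive to these substitutions: the compact form Eq.~(\ref{delup}) of the protocol holds for the new $\alpha$; Lemma~\ref{intolemma} holds with $\eta = 1-\gamma$, its first inequality now following from Eq.~(\ref{fasterconv}) of Proposition~\ref{fasterprop} rather than from Theorem~\ref{constheorem}; and Lemma~\ref{avglemma}, Lemma~\ref{subgrdecay}, Corollary~\ref{distbounds} and Lemma~\ref{avgperf} then carry over word for word with this value of $\eta$ (indeed Lemma~\ref{avgperf} does not involve $\eta$ at all, while Lemma~\ref{subgrdecay} and Corollary~\ref{distbounds} involve it only through the factor $1/(1-\sqrt{\eta})$).

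Next I would redo the final computation of the proof of Theorem~\ref{opthm}. The only quantity that behaves differently is the geometric-sum prefactor: from $\sqrt{1-\gamma} \le 1 - \gamma/2$ we obtain $1/(1-\sqrt{\eta}) \le 2/\gamma = \Theta\bigl(\sqrt{U\log U}\bigr)$ in place of the earlier bound $18U$. Plugging Corollary~\ref{distbounds} into Lemma~\ref{avgperf} exactly as before leaves an error bound whose leading behaviour is $\frac{(\overline{x}(1)-w^*)^2}{2\beta T} + O(\beta L^2) + O\bigl(\beta L^2\sqrt{U\log U}\bigr) + O\bigl(\frac{L\sqrt{U\log U}}{T\sqrt{n}}\,||\y(1)-\overline{x}\1||_2\bigr)$, together with the parallel expression for the dispersion coming from Eq.~(\ref{hatdiff}). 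Balancing the $1/(\beta T)$ term against the $\beta L^2\sqrt{U\log U}$ term forces $\beta = \Theta\bigl(1/(L\sqrt{T}\,(U\log U)^{1/4})\bigr)$, which is exactly the prescribed step-size; with this choice both of those terms become $\Theta\bigl(L(U\log U)^{1/4}/\sqrt{T}\bigr)$, the residual $O(\beta L^2)$ term is of strictly lower order, and the initial-condition term becomes $\Theta\bigl(L\sqrt{\log n}\,||\y(1)-\overline{x}\1||_2/T\bigr)$. Substituting $U = \Theta(n)$ and rewriting $\sqrt{\log n} = (n\log n)^{1/4}(\log n/n)^{1/4}$ puts both the dispersion and the error bound into the displayed forms.

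Finally, for the geometric random graph $G(n,r)$ with $r^2 \ge 8c\log n/n$, $c>1$, I would condition once and for all on the high-probability event of Theorem~1 of~\cite{avin} under which $\H_M(i \rightarrow j) = O(n\log n)$ for all $i,j$; as noted in the proof sketch of Proposition~\ref{fasterprop}, on that event $\lambda_2(M') \le 1 - 1/\Omega(n\log n)$, hence Eq.~(\ref{fasterconv}) holds, and the deterministic argument above then applies verbatim. I do not expect a genuinely hard step here --- the real content already resides in Theorem~\ref{opthm} and Proposition~\ref{fasterprop}, and what remains is essentially bookkeeping. The one point that needs a little care is checking that the modified protocol (the new momentum constant $1-\gamma$ and the joint initialization $\y_i(1)=\z_i(1)=\x_i(1)$) still slots cleanly into the $B$-matrix framework of Lemma~\ref{intolemma}, and that the ``with high probability'' qualifier is discharged at the very start, so that every subsequent inequality is deterministic on the conditioning event.
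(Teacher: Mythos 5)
Your proposal is correct and follows exactly the route the paper takes: the paper's own ``proof'' is the one-sentence remark that the argument of Theorem~\ref{opthm} carries over with the improved bound $1/(1-\sqrt{\eta}) = O(\sqrt{n\log n})$ supplied by Proposition~\ref{fasterprop}, with all details omitted. Your bookkeeping --- the substitution $\alpha = 2-\gamma$ in the $B$-matrix framework, the balancing of $1/(\beta T)$ against $\beta L^2\sqrt{U\log U}$ to recover the prescribed step-size, and the rewriting $\sqrt{\log n} = (n\log n)^{1/4}(\log n/n)^{1/4}$ --- correctly supplies those omitted details.
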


\medskip

The proof is almost identical to the proof of Theorem \ref{opthm}, but using the improved bound $1/(1 - \sqrt{\eta}) =  O ( \sqrt{n \log n)}$ which follows from Corollary \ref{fasterprop} for grids or geometric random graphs. We omit the details.

\section{Formation maintenance and leader-following\label{sec:multi}}

In this section, we describe the application of Theorem \ref{constheorem} to two closely related problems: formation maintenance from offset measurements and leader-following. In the former, a collection of nodes must maintain a formation, provided that each node can measure the positions of other nodes in its own  coordinate system. The leader-following problem is similar to the average consensus problem in that all nodes need to agree on the same value, but this value is not the initial average but now the value held by a certain leader. 

\subsection{Formation maintenance from offset measurements\label{sec:form}}

The setup we describe here closely parallels \cite{othesis, krick, ren}, which noted the connection between formation control from offsets and consensus. As in the previous sections, there is a fixed connected graph $G=(V,E)$, which we now assume to be bidirectional\footnote{Bidirectional graphs differ from undirected graphs as follows. An undirected graph $G=(V,E)$  has edge set $E$ composed of {\em unordered pairs}; by contrast, a bidirectional graph has edge set $E$ composed of ordered pairs but $(i,j) \in E$ implies $(j,i) \in E$.}. We will associate with each edge $(i,j) \in E$ a vector ${\bf r}_{ij} \in \R^d$ known to both nodes $i$ and $j$. A collection of points $\p_1, \ldots, \p_n$ in $\R^d$ are said to be ``in formation'' if 
for all $(i,j) \in E$ we have that $\p_j - \p_i = {\bf r}_{ij}$. In this case, we will write $\p_1, \ldots, \p_n \in {\cal F}$. 

Intuitively, we may think of ${\bf r}_{ij}$ as the ``offset'' between nodes $i$ and $j$; for a collection of points in formation, the location ${\bf p}_j$ in the coordinate system where ${\bf p}_i$ is at the origin is exactly ${\bf r}_{ij}$. Note that if $\p_1, \ldots, \p_n \in {\cal F}$ then for any vector ${\bf t} \in \R^d$ we will have $\p_1 + {\bf t}, \ldots, \p_n+{\bf t}\in {\cal F}$. In other words, a translate of any collection of points in formation is also in formation. 


We will say that the formation defined by ${\bf r}_{ij}, (i,j) \in E$ is {\em valid} if there is at least one collection of points $\p_1, \ldots, \p_n$ in formation. It is easy to see that not every 
choice of ${\bf r}_{ij}$ corresponds to a valid formation: for example, we must have that ${\bf r}_{ij} = - {\bf r}_{ji}$. 

We now consider the problem of formation maintenance with offset measurements: a collection of nodes with initial positions $\p_1(0), \ldots, \p_n(0)$ would like to repeatedly update these positions so that $\p_1(t), \ldots, \p_n(t)$ approaches a collection of points which are in the formation. We will assume that each node $i$ knows the offsets $\p_j(t)-\p_i(t)$ for all of its neighbors $j$ at every time step $t$. However, we do not assume the existence of any kind of global coordinate system common to all the nodes.

This is a considerably simplified model of real-world formation control in several respects. First, we are assuming a first-order model wherein nodes can modify their positions directly; a more realistic model would closely model the dynamics of each agent. As the dynamics of different kinds of agents will be quite different (see, for example, studies of the formation control problem for UAVs in \cite{uav},  unicycles in \cite{unicycle1, unicycle2}, and marine craft \cite{ocean}), here we stick with the simplest possible model. Secondly, we ignore a number of other relevant considerations, such as collision avoidance, velocity alignment, connectivity maintenance, and so forth. 

We note that the assumptions we make are different than the most common assumptions in formation control, wherein one often assumes that agents are able to measure converge to formations defined through distances (e.g., \cite{form1, form2}). In that setup, formations are defined not only up to translations but also up to rotations. Our assumptions are  closest to some of the problems considered in \cite{ren, krick, arcak1} which studied more sophisticated variations of this setup. 


%
Motivated by Theorem \ref{constheorem}, we propose the following method. Each node initializes $\y_i(1) = \p_i(1)$ and updates as \begin{small}
\begin{eqnarray} \y_i(t+1) &  = & \p_i(t) + \frac{1}{2} \sum_{j \in N(i)} \frac{\p_j(t) - \p_i(t) - {\bf r}_{ij}}{\max(d(i), d(j))} \label{formcontiteration} \\ 
 \p_i(t+1) &  =  & \y_i(t+1) + \left( 1 - \frac{2}{9U+1} \right) \left( {\bf y}_i(t+1) - {\bf y}_i(t) \right)  \nonumber
\end{eqnarray} \end{small}

Note that this protocol may be implemented by each agent in a distributed way, with each agent $i$ needing to know only measurements $\p_j(t) - \p_i(t)$ for all of its neighbors $j \in N(i)$. Our performance bound for this protocol is given in the following proposition and corollary.

\medskip

\begin{proposition}  Suppose that $G=(V,E)$ is a bidirectional, connected graph and  the formation defined by ${\bf r}_{ij}, (i,j) \in E$ is valid. Then given a collection of points $\p_1, \ldots, \p_n$ not in the formation, there is a unique collection of points
$\overline{\p}_1, \ldots, \overline{\p}_n$ such that:
\begin{itemize} \item $\overline{\p}_1, \ldots, \overline{\p}_n$ is in the formation. 
\item \[ \frac{1}{n} \sum_{i=1}^n \overline{\p}_i = \frac{1}{n} \sum_{i=1}^n {\p_i} \] 
\end{itemize} \label{formexist}
\end{proposition}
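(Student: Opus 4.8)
The plan is to reduce everything to linear algebra over the incidence structure of $G$. First I would fix a spanning tree $T$ of the connected graph $G$ and use it to construct one witness collection in formation. Since the formation is valid, pick any collection $\q_1,\dots,\q_n$ with $\q_j - \q_i = {\bf r}_{ij}$ for all $(i,j)\in E$; such a collection exists by the definition of validity. Then for a target collection $\overline{\p}_1,\dots,\overline{\p}_n$, being in the formation is equivalent to $\overline{\p}_i = \q_i + {\bf t}$ for a single common translation vector ${\bf t}\in\R^d$: indeed, if $\overline{\p}_j-\overline{\p}_i = {\bf r}_{ij} = \q_j-\q_i$ on every edge of $T$, then $\overline{\p}_i - \q_i$ is constant along $T$ and hence (by connectedness) constant on all of $V$; conversely any common translate of $\q$ stays in formation, as noted in the text. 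So the set of collections in formation is precisely the one-parameter (in ${\bf t}$) affine family $\{(\q_1+{\bf t},\dots,\q_n+{\bf t}) : {\bf t}\in\R^d\}$.

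Next I would impose the centroid constraint. We need $\tfrac1n\sum_i(\q_i+{\bf t}) = \tfrac1n\sum_i \p_i$, i.e. ${\bf t} = \tfrac1n\sum_i \p_i - \tfrac1n\sum_i \q_i$, which determines ${\bf t}$ uniquely. Setting $\overline{\p}_i := \q_i + {\bf t}$ with this ${\bf t}$ gives a collection that is in the formation and has the same centroid as $\p_1,\dots,\p_n$; uniqueness is immediate since any two candidates differ by a common translate (first bullet) whose value is pinned down by the centroid equation (second bullet). This handles existence and uniqueness simultaneously.

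I don't expect a serious obstacle here; the one point requiring a little care is the claim that the witness $\q$ can be chosen componentwise — but since the coordinates of $\R^d$ decouple, it suffices to treat the scalar case $d=1$ and apply it coordinatewise, and in the scalar case validity by hypothesis directly furnishes $\q$. (One could alternatively note that validity of the formation forces the consistency conditions ${\bf r}_{ij} = -{\bf r}_{ji}$ and that the sum of ${\bf r}$ around every cycle vanishes, which is exactly the integrability condition guaranteeing a potential $\q$; but since we are handed a valid formation we need not re-derive this.) The mild subtlety worth stating explicitly in the write-up is the step "$\overline{\p}_i-\q_i$ constant along a spanning tree $\Rightarrow$ constant on $V$," which is where connectedness of $G$ is used.
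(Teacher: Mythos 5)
Your proposal is correct and follows essentially the same route as the paper: both obtain existence by translating a witness collection $\q$ (guaranteed by validity) so that its centroid matches that of $\p_1,\ldots,\p_n$, and both obtain uniqueness by using connectedness to show that any two collections in formation differ by a common translate, which the centroid condition then pins down. The only cosmetic difference is that you propagate the constancy of $\overline{\p}_i-\q_i$ along a spanning tree while the paper uses paths between arbitrary pairs of nodes; these are the same argument.
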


\medskip

\begin{corollary} Suppose that (i) $G=(V,E)$ is a bidirectional connected graph (ii) the formation defined by ${\bf r}_{ij}, (i,j) \in E$ is valid (iii) $U \geq n$ (iv) every node in an undirected connected graph implements the update of 
Eq. (\ref{formcontiteration}). We then have that \[ \sum_{i=1}^n ||\y_i(t) - \overline{\p}_i||_2^2 \leq 2 \left( 1 - \frac{1}{9U} \right)^{t-1} \sum_{i=1}^n ||\y_i(1) - \overline{\p}_i||_2^2 \] \label{formcor}
\end{corollary}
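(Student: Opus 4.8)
The plan is to reduce the formation-maintenance dynamics of Eq.~(\ref{formcontiteration}) to the consensus dynamics of Eq.~(\ref{alm}) by a change of variables, applied coordinatewise in $\R^d$. Concretely, for each spatial coordinate $\ell \in \{1,\dots,d\}$, define the scalar offset vectors $\x^{(\ell)}(t), \y^{(\ell)}(t) \in \R^n$ by $[\x^{(\ell)}(t)]_i = [\p_i(t)]_\ell - [\overline{\p}_i]_\ell$ and similarly $[\y^{(\ell)}(t)]_i = [\y_i(t)]_\ell - [\overline{\p}_i]_\ell$, where $\overline{\p}_1,\dots,\overline{\p}_n$ is the unique in-formation configuration with the same centroid as $\p_1(1),\dots,\p_n(1)$ guaranteed by Proposition~\ref{formexist}. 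The first step is to verify that these new variables satisfy exactly Eq.~(\ref{alm}). This is where the defining property $\overline{\p}_j - \overline{\p}_i = {\bf r}_{ij}$ of the target formation enters: substituting $\p_i(t) = [\x^{(\ell)}(t)]_i + [\overline{\p}_i]_\ell$ into Eq.~(\ref{formcontiteration}), the increment $\p_j(t) - \p_i(t) - {\bf r}_{ij}$ becomes $[\x^{(\ell)}(t)]_j - [\x^{(\ell)}(t)]_i$, because $[\overline{\p}_j]_\ell - [\overline{\p}_i]_\ell = [{\bf r}_{ij}]_\ell$ cancels the $-{\bf r}_{ij}$ term. The constant shift by $[\overline{\p}_i]_\ell$ passes through both lines of Eq.~(\ref{formcontiteration}) unchanged (the second line only involves differences $\y_i(t+1)-\y_i(t)$, in which the shift cancels; the first line's consensus-type average of differences is also shift-invariant after the cancellation just described). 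Hence $\y^{(\ell)}(t), \x^{(\ell)}(t)$ evolve precisely by Eq.~(\ref{alm}) with initialization $\y^{(\ell)}(1) = \x^{(\ell)}(1)$.

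The second step is to identify the relevant average. Since $\overline{\p}_1,\dots,\overline{\p}_n$ has the same centroid as $\p_1(1),\dots,\p_n(1)$, we have $\frac{1}{n}\sum_i [\x^{(\ell)}(1)]_i = \frac{1}{n}\sum_i \big([\p_i(1)]_\ell - [\overline{\p}_i]_\ell\big) = 0$ for every $\ell$. Thus in the language of Theorem~\ref{constheorem}, the initial average $\overline{x}$ of the vector $\x^{(\ell)}(1)$ is $0$. Applying Theorem~\ref{constheorem} to each coordinate $\ell$ then gives
\[ \|\y^{(\ell)}(t)\|_2^2 \;\le\; 2\left(1 - \frac{1}{9U}\right)^{t-1} \|\y^{(\ell)}(1)\|_2^2 . \]

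The final step is to sum over $\ell = 1,\dots,d$. Since $\sum_{\ell=1}^d \|\y^{(\ell)}(t)\|_2^2 = \sum_{\ell=1}^d \sum_{i=1}^n \big([\y_i(t)]_\ell - [\overline{\p}_i]_\ell\big)^2 = \sum_{i=1}^n \|\y_i(t) - \overline{\p}_i\|_2^2$, and likewise at $t=1$, adding the $d$ coordinatewise bounds yields exactly the claimed inequality. I expect essentially no obstacle here beyond bookkeeping; the one point requiring care is the verification in step one that the affine shift by $[\overline{\p}_i]_\ell$ is absorbed correctly in \emph{both} update lines and that the validity hypothesis together with Proposition~\ref{formexist} is what licenses choosing a target configuration whose coordinatewise offsets reproduce the $\y_i(t)$-dynamics as a genuine instance of Eq.~(\ref{alm}). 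Note also that the hypothesis "every node in an undirected connected graph implements the update" should be read as the bidirectional graph $G$ of hypothesis (i); the underlying undirected structure is connected, so Theorem~\ref{constheorem} applies coordinatewise as stated.
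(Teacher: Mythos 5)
Your proposal is correct and follows essentially the same route as the paper's proof: subtract the target configuration $\overline{\p}_i$ (using $\overline{\p}_j - \overline{\p}_i = {\bf r}_{ij}$ to cancel the offset term), observe via Proposition \ref{formexist} that the shifted initial values have zero mean, apply Theorem \ref{constheorem} coordinatewise, and sum over the $d$ coordinates. No gaps.
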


\medskip

Summarizing,  the protocol of Eq. (\ref{formcontiteration}) drives all $\y_i(t)$ to the point in the formation that has the same center of mass as the initial collection of positions. Furthermore, the convergence is geometric with constant $1-1/(9U)$. Under our assumption that $U$ is within a constant factor of $n$, this gives the linear time bound $O \left( n \ln {\sqrt{\sum_{i=1}^n ||\y_i(1) - \overline{\bf p}_i(1)||_2^2}}/{\epsilon} \right)$ until all nodes are within a distance of $\epsilon$ of their final destinations. 

It remains the prove the above proposition and corollary. 

\medskip

\begin{proof}[Proof of Proposition \ref{formexist}] The existence of such a $\overline{\p}_1, \ldots, \overline{\p}_n$ may be established as follows. 
By our assumption that the formation is valid, let ${\q}_1, \ldots, {\q}_n$ be a collection of points in formation; setting 
\[ \overline{\p}_i = \q_i + \frac{1}{n} \sum_{j=1}^n ( \p_j - \q_j ) \] it is immediate that we have obtained $\overline{\p}_1, \ldots, \overline{\p}_n$
with the desired properties. 

The uniqueness of $\overline{\p}_1, \ldots, \overline{\p}_n$ is an immediate consequence of the fact that two collections of points in formation must be translates of each other. Indeed, if $\q_1, \ldots, \q_n \in {\cal F}$ and ${\cal P}_{ij}$ is a path from $i$ to $j$ in $G$, then
\[ \q_j = \q_i + \sum_{j \in {\cal P}_{ij}} {\bf r}_{ij} \] 
It follows that if $\z_1, \ldots, \z_n$ is another collection of points in the formation, then $\z_j - \q_j = \z_i - \q_i$ for all $i,j=1,\ldots,n$, i.e., $\z_1, \ldots, \z_n$ is a translate of $\q_1, \ldots, \q_n$. 
\end{proof}

\begin{proof}[Proof of Corollary \ref{formcor}] Define
\begin{eqnarray*} \uu_i(t) & = & \p_i(t) - \overline{\p}_i \\ 
\w_i(t) & =  & \y_i(t) - \overline{\p}_i
\end{eqnarray*} We can subtract the identity 
\[ \overline{\p}_i = \overline{\p}_i + \frac{1}{2} \sum_{j \in N(i)} \frac{\overline{\p}_j - \overline{\p}_i - {\bf r}_{ij}}{\max(d(i), d(j))} \]
from the first line of Eq. (\ref{formcontiteration}) to obtain 
\begin{equation} \label{topform} \w_i(t+1) = \uu_i(t) + \frac{1}{2} \sum_{j \in N(i)} \frac{\uu_j(t) - \uu_i(t)}{\max(d(i), d(j))} 
\end{equation} Furthermore, subtracting $\overline{\p}_i$ from the second line of Eq. (\ref{formcontiteration}), we obtain \begin{small}
\begin{equation} \label{bottomform} \uu_i(t+1) = \w_i(t+1) + \left( 1 - \frac{2}{9U+1} \right) (\w_i(t+1) - \w_i(t)) 
\end{equation} \end{small}

Considering Eq. (\ref{topform}) and Eq. (\ref{bottomform}) together, we see that the vectors obtained by stacking up the $i$'th components of 
$\uu_i(t), \w_i(t)$ satisfy Eq. (\ref{alm}). Furthermore, \begin{small} \[ \frac{1}{n} \sum_{i=1}^n \w_i(1) = \frac{1}{n} \sum_{i=1}^n \y_i(1) - \overline{\p}_i = \frac{1}{n} \sum_{i=1}^n \p_i(1) - \frac{1}{n} \sum_{i=1}^n \overline{\p}_i = {\bf 0},\] \end{small} and therefore for each $j=1, \ldots, d$, the initial average of the $j$'th entries of the vectors $\w_i(1)$ is $0$. Consequently,
applying Theorem \ref{constheorem} we obtain that for all $j=1, \ldots, d$, 
\[ \sum_{i=1}^n [\w_i(t)]_j^2 \leq 2 \left( 1 - \frac{1}{9U} \right)^{t-1} \sum_{i=1}^n [\w_i(1)]_j^2 \] From the definition of $\w_i(t)$, we therefore have \begin{small}
\[ \sum_{i=1}^n \left( [\y_i(t)]_j - [\overline{\p}_i]_j \right)^2 \leq 2 \left( 1 - \frac{1}{9U} \right)^{t-1} \sum_{i=1}^n \left( [\y_i(1)]_j - [\overline{\p}_i]_j \right)^2  \] 
\end{small} Summing up over $j=1, \ldots, d$ yields the statement of the corollary.

\end{proof}

\subsection{Leader-following} 

The motivation for studying leader-following comes from the problem of velocity alignment:  a single node keeps its velocity fixed to 
some $\vv \in \R^d$, and all other nodes repeatedly update their velocities based only on observations of neighbors' velocities; the goal is for the velocity of each agent to converge to the leader's velocity $\vv$. A slightly more general setup, which we will describe here, involves a nonempty subset of leaders $S \subset \{1, \ldots, n \}$ all of whom fix their velocities to the same $\vv$. 

The leader-following problems is naturally motivated by the problems of controlling multi-agent systems. For example, it is desirable for a flock of autonomous vehicles to be controlled by a single (possibly human) agent which steers one of the vehicles and expects the rest to automatically follow. A number of variations of this problem have been considered, following
its introduction in \cite{morse}; we refer the reader to \cite{ leader3, leader5, leader4} for some more recent references. 

Here our goal is to design a protocol for leader-following with linear convergence time based on Theorem \ref{constheorem}. Specifically, we propose that, while each leader node keeps $\x_i(t) = \vv$, each non-leader node $i$ initializes $\y_i(1) = \x_i(1)$ and updates as 
\begin{eqnarray} \y_i(t+1) & = & \x_i(t) + \frac{1}{2} \sum_{j \in N_i(t)} \frac{\x_j(t) - \x_i(t)}{\max(d(i), d(j))} \label{lfu} \\ 
\x_i(t+1) & = & \y_i(t+1) + \left( 1 - \frac{2}{18U+1} \right) ( \y_i(t+1) - \y_i(t) ) \nonumber
\end{eqnarray}

Our performance bound for this protocol is given in the following corollary.

\medskip

\begin{corollary} Suppose (i) $G$ is a connected, undirected graph (ii) each node $i$ in a nonempty set $S$ sets $\x_i(t) = \vv$ for all $t$
(ii) each node not belonging to $S$ updates using Eq. (\ref{lfu}) (iv) $U \geq n$. Then
\[ \sum_{i=1}^n ||\y_i(t) - \vv||_2^2 \leq 2 \left( 1 - \frac{1}{18U} \right)^{t-1} \sum_{i=1}^n ||\y_i(1) - \vv||_2^2 \] \label{lfucor}
\end{corollary}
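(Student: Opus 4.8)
The plan is to reduce the leader-following dynamics to the consensus analysis of Theorem~\ref{constheorem}, applied not to the lazy Metropolis matrix $M'$ itself but to the principal submatrix of $M'$ indexed by the non-leader nodes. First I would change coordinates, setting $\tilde\y_i(t) = \y_i(t) - \vv$ and $\tilde\x_i(t) = \x_i(t) - \vv$. Because the updates in Eq.~(\ref{lfu}) are affine and translation invariant, $\tilde\x_i(t),\tilde\y_i(t)$ obey the same recursion, and for every leader $i\in S$ we have $\tilde\x_i(t)=\0$ for all $t$. Exactly as in the computation preceding Lemma~\ref{eigbound}, the first line of Eq.~(\ref{lfu}) reads $\y_i(t+1)=[M'\x(t)]_i$; hence, restricting to the $m:=n-|S|$ non-leader indices and writing $\widehat M'$ for the corresponding principal submatrix of $M'$, the vanishing of the leader coordinates gives, componentwise in $\R^d$ (so we may assume $d=1$),
\[ \tilde\y(t+1) = \widehat M' \tilde\x(t), \qquad \tilde\x(t+1) = \tilde\y(t+1) + \Bigl(1-\tfrac{2}{18U+1}\Bigr)\bigl(\tilde\y(t+1)-\tilde\y(t)\bigr), \]
which is precisely the iteration of Eq.~(\ref{alm}) with $M'$ replaced by $\widehat M'$ and $U$ replaced by $2U$ (note $18U+1 = 9(2U)+1$, and correspondingly the target rate $1-\tfrac{1}{9(2U)} = 1-\tfrac{1}{18U}$).

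Two properties of $\widehat M'$ are then needed to rerun the proof of Theorem~\ref{constheorem}, and the second is the crux. (i) $\widehat M'$ is symmetric and positive semidefinite: for any $\w\in\R^m$, extending by zeros on $S$ to $\tilde\w\in\R^n$ gives $\w^T\widehat M'\w = \tilde\w^T M'\tilde\w \ge 0$ and $\|\widehat M'\|_2 \le \|M'\|_2 = 1$, so every eigenvalue of $\widehat M'$ lies in $[0,1]$. (ii) I claim $1-\lambda_{\max}(\widehat M') \ge \tfrac{1}{12n^2}$. Indeed, since $G$ is connected and $S\ne\emptyset$, the lazy Metropolis walk reaches $S$ from every state in finite expected time, so $N := (I-\widehat M')^{-1} = \sum_{k\ge 0}\widehat M'^k$ is a well-defined nonnegative matrix whose row sums are expected hitting times: $[N\1]_i = \mathcal{H}_{M'}(i\to S) \le \max_{i,j\in[n]}\mathcal{H}_{M'}(i\to j) \le 12n^2$, the last inequality being exactly the bound established in the proof of Lemma~\ref{eigbound}. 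Since the spectral radius is at most any induced norm, $\tfrac{1}{1-\lambda_{\max}(\widehat M')} = \rho(N) \le \|N\|_\infty = \max_i[N\1]_i \le 12n^2$, which is the claim. Combining (i), (ii) and $U\ge n$, every eigenvalue $\lambda$ of $\widehat M'$ satisfies $\lambda\in[0,1)$ and $\tfrac{1}{81(2U)^2(1-\lambda)} \le \tfrac{12n^2}{324U^2} \le 1$ — which is exactly the hypothesis under which Lemma~\ref{nonprincipaldecay} was proved, now with $2U$ in place of $U$.

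With these facts in hand I would finish by reprising the proof of Theorem~\ref{constheorem} essentially verbatim. Diagonalize $\widehat M' = Q\Lambda Q^T$ with $Q$ orthogonal, adopt the convention $\tilde\y(0)=\tilde\y(1)$, and pass to $\z(t) = Q^T\tilde\y(t)$; as in Lemma~\ref{coordchange} the recursion decouples into two-dimensional blocks $[z_i(t+1);\,z_i(t)] = B(\lambda_i)[z_i(t);\,z_i(t-1)]$ with $B(\cdot)$ formed using $\alpha = 2-\tfrac{2}{18U+1}$. Applying Lemma~\ref{nonprincipaldecay} (with $2U$ in place of $U$, legitimate by the previous paragraph) to each block gives $z_i^2(t) \le 2\bigl(1-\tfrac{1}{18U}\bigr)^{t-1}z_i^2(1)$; here — unlike in the consensus case — \emph{all} eigenvalues of $\widehat M'$ are strictly below $1$, so there is no surviving average component and we sum over every $i=1,\dots,m$. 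Orthogonality of $Q$ yields $\sum_{i=1}^m\tilde y_i^2(t) = \sum_{i=1}^m z_i^2(t) \le 2(1-\tfrac{1}{18U})^{t-1}\sum_{i=1}^m z_i^2(1) = 2(1-\tfrac{1}{18U})^{t-1}\sum_{i=1}^m\tilde y_i^2(1)$. Summing over the $d$ coordinates and recalling $\tilde\y_i = \y_i-\vv$ (interpreting $\y_i(t)=\vv$ for leaders, so those terms vanish on both sides) gives the corollary. The only genuinely new ingredient beyond the consensus argument is the eigenvalue-gap estimate of step (ii); that is where I expect the work to lie, and the choice of $18$ rather than $9$ in Eq.~(\ref{lfu}) is precisely the slack needed to absorb both the passage from $U$ to $2U$ and the change of the relevant constant from $71$ to $12$.
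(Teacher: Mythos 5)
Your proposal is correct, but it takes a genuinely different route from the paper. The paper proves Corollary~\ref{lfucor} by a lifting trick (credited to \cite{julien}): it builds an auxiliary undirected graph $G'$ on at most $2n$ vertices by splitting each non-leader $i$ into two copies $i^A,i^B$ carrying initial values $x_i(1)$ and $-x_i(1)$, while the leaders keep initial value $0$; by symmetry the leaders' values remain $0$ under the plain consensus iteration of Eq.~(\ref{alm}) on $G'$, so the leader-following dynamics on $G$ is exactly a projection of ordinary average consensus on $G'$, and Theorem~\ref{constheorem} applied with the node-count bound $2U$ yields the rate $1-\tfrac{1}{18U}$ as a black box. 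You instead analyze the substochastic principal submatrix $\widehat M'$ on the non-leader nodes directly, and the crux of your argument --- the gap estimate $1-\lambda_{\max}(\widehat M')\ge \tfrac{1}{12n^2}$ obtained from $\bigl[(I-\widehat M')^{-1}\1\bigr]_i=\mathcal{H}_{M'}(i\to S)$ together with $\rho(N)\le\|N\|_\infty$ --- is a new spectral ingredient that the paper never needs. Both arguments are sound and land on the same constant (your $12n^2$ versus the effective $12(2n)^2$ on the doubled graph are both comfortably absorbed by the slack in $\mu\le 1$). What the paper's route buys is economy: no new eigenvalue estimate, no re-derivation of Lemmas~\ref{coordchange} and \ref{nonprincipaldecay} for a matrix other than $M'$, just a citation of Theorem~\ref{constheorem} on a larger graph. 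What your route buys is transparency: it explains intrinsically why the factor of two (hence $18U$ rather than $9U$) appears, it shows that \emph{every} mode decays (no surviving consensus component), and the hitting-time-to-$S$ identity is reusable for sharper, $S$-dependent rates that the doubling construction obscures. One small presentational caveat: Lemma~\ref{nonprincipaldecay} is stated for eigenvalues of $M'$, so as you note you are reusing its proof rather than its statement; you correctly re-verify the only hypothesis that proof needs, namely $\lambda\in[0,1)$ and $\tfrac{1}{81(2U)^2(1-\lambda)}\le 1$.
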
 

\medskip

 Note that, as before, under the assumption that $U$ is within a constant factor of $n$, this corollary gives  a linear $O \left( n \left( \ln {\sqrt{\sum_{i=1}^n ||\y_i(1) - \vv||_2^2}}/{\epsilon} \right) \right)$ bound on the number of updates until each node is within a distance of $\epsilon$ of $\vv$. 

\medskip

\begin{proof}[Proof of Corollary \ref{lfucor}] Without loss of generality, let us make the following three  assumptions. First, let us relabel the nodes so that the first $k$ nodes comprise all the non-leader nodes, i.e.,  $S = \{1, \ldots, k\}^c$. Secondly, by observing that the Eq. (\ref{lfu}) can be decoupled along the components of the vectors $\x_i(t), \y_i(t)$, we can without loss of generality assume these vectors belong to $\R^1$, i.e., they are numbers. Thus $v$ will be a number as well, and in the rest of this proof, $\x(t),\y(t)$ will denote the vectors whose $i$'th components are the numbers $x_i(t), y_i(t)$. Finally, since adding the same  constant to each entry of $x_i(1), y_i(1)$, as well as to $v$, adds the same constant to all $x_i(t), y_i(t)$, we can assume without loss of generality that $v=0$.

We now proceed to the proof itself, which is an application of a trick from \cite{julien} which we now sketch. We will show that the dynamics of Eq. (\ref{lfu}) are 
nothing more than the dynamics of Eq. (\ref{alm}) on a modified undirected graph $G'$ which has at most twice as many vertices. The graph $G'$ can be obtained by splitting each non-leader agent $i \in S^c$ into two distinct nodes, one with the same initial value as $i$, and the other with a {\em negated} initial value. Informally speaking, this will allow us to claim that all the leader agents perform the consensus iterations of Eq. (\ref{alm}), {\em which will keep their value at $0$ by symmetry}, rather than keeping their value fixed at $v=0$ by design. We can then apply Theorem \ref{constheorem} to bound the convergence times of the dynamics of Eq. (\ref{alm}) on the new graph with extra states. This idea originates from \cite{julien} where it is used to analyze consensus protocols with negative weights. 

Formally, we construct a new undirected graph $G' = (V',E')$ as follows. For each node $i \in S^c$ (recall that by assumption the set of leader agents $S$ satisfies $S^c = \{1, \ldots, k\}$) we create  two nodes, $i^A, i^B$. Thus $V' = \{1^A, \ldots, k^A, 1^B, \ldots, k^B, k+1, \ldots, n\}$. Next, for every edge $(i,j) \in E$ where both $i,j \in S^c$, we put two edges $(i^A,j^A), (i^B, j^B)$ into $E'$; and for every edge $(i,j) \in E$ where one of $i,j$ in $S$ and the other is not - say $i \in S, j \in S^c$ - we put the two edges $(i, j^A), (i, j^B)$ into $E'$. 

Note that because $G$ is connected by assumption, we have that $G'$ is connected; indeed, for any pair of nodes $i,j$ in  $G'$, there exists a path from $i$ to $j$ by putting together two paths from their corresponding nodes in $G$ to an arbitrarily chosen leader node. 

Finally, we set initial values $x_i'(1) = 0$ for all leader nodes $i \in S$; and for non-leader nodes $i \in S^c$, we set $x_{i^A}'(1) = x_i(1)$, $x_{i^B}'(1) = - x_i(1)$. As before, 
we initialize $y_i(1) = x_i(1)$.

The following statement now follows immediately by induction: suppose we run Eq. (\ref{alm}) on the undirected graph $G'$ with initial conditions $\x'(1), \y'(1)$ and the upper bound $2U$ on the number of nodes and obtain the vectors $\x'(t), \y'(t)$; and further suppose we run Eq. (\ref{lfu}) on the graph $G$ with initial conditions $\x(1), \y(1)$ to obtain the vectors $\x(t), \y(t)$;  then for all $i = 1, \ldots, k$ and all iterations $t$, 
$x_i(t) = x_{i^A}(t)$ and $y_i(t) = y_{i^A}(t)$.

Applying Theorem \ref{constheorem} we obtain that 
\[ ||\y'(t)||_2^2 \leq 2 \left( 1 - \frac{1}{18U} \right)^{t-1} ||\y'(1)||_2^2. \] Noting that, for all $t$, $||\y'(t)||_2^2 = 2 ||\y(t)||_2^2$, we have that this equation implies the current corollary. \end{proof} 

\section{Simulations: average consensus and distributed median computation\label{sec:simul}} We now describe several simulations designed to show the performance of our protocol on some particular graphs. 

We begin with some simulations of our protocol for average consensus. In Figure \ref{cons-simul}, we simulate the  protocol of Eq. (\ref{alm}) on the line graph and the lollipop graph\footnote{The lollipop graph is composed of a complete graph on $n/2$ nodes connected to a line of $n/2$ nodes.}. These graphs are natural examples within the context of consensus protocols. The line graph represents a long string of nodes in tandem, while the lollipop graph and its close variations have typically been used as examples of graphs on which consensus performs poorly; see, for example, \cite{land-odl}. The initial vector was chosen to be  $x_1(0)=1,  x_i(0)=0$ in both cases, and we assume that $U=n$, i.e., each node knows the total amount of nodes in the system. In both cases, we plot the number of nodes on the x-axis and the first time $||\x(t) - \overline{x} \1||_{\infty} < 1/100$ on the y-axis. 

The plots show the protocol of Eq. (\ref{alm}) exhibiting a curious, somewhat uneven behavior on these examples. Nevertheless, the linear bound of Theorem \ref{constheorem} appears to be consistent with the fairly quick convergence to consensus shown in the figure. 

\begin{figure*} 
\begin{tabular}{cc} 
\includegraphics[scale=0.45]{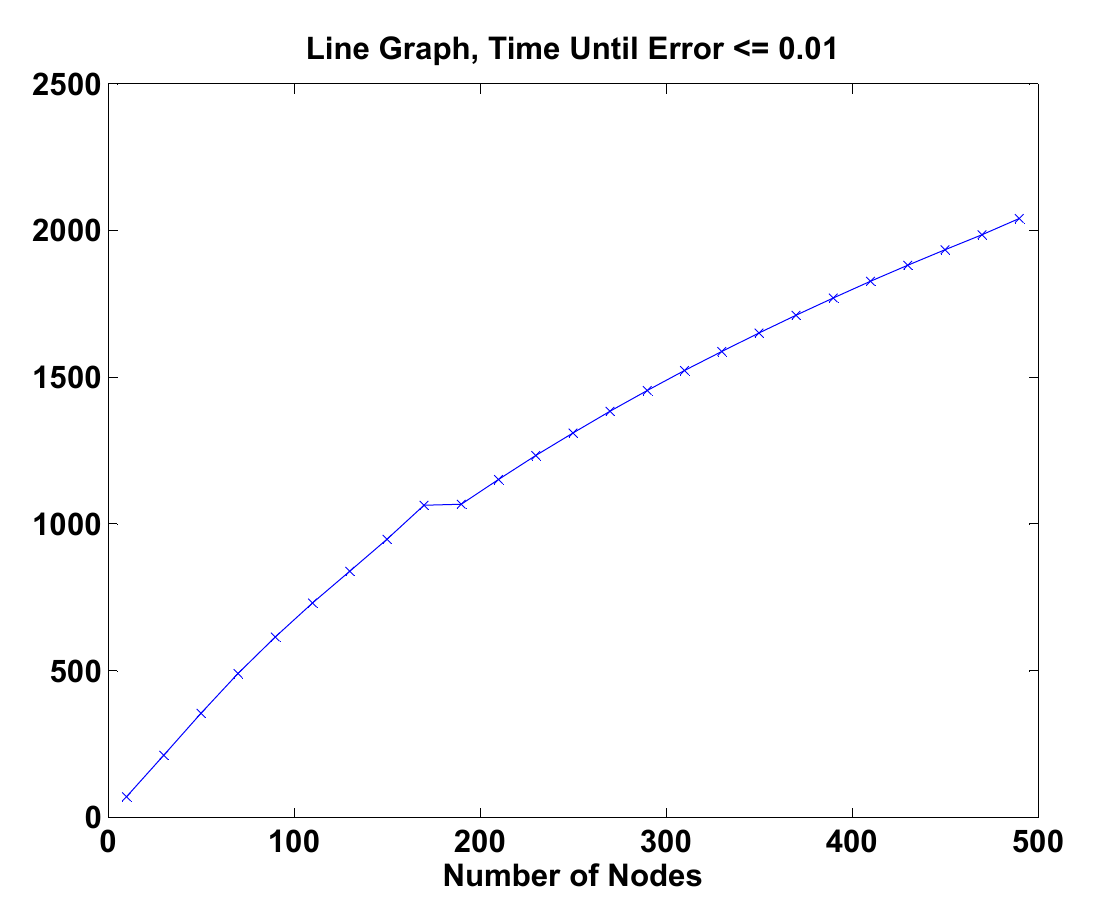} 
&  \includegraphics[scale=0.45]{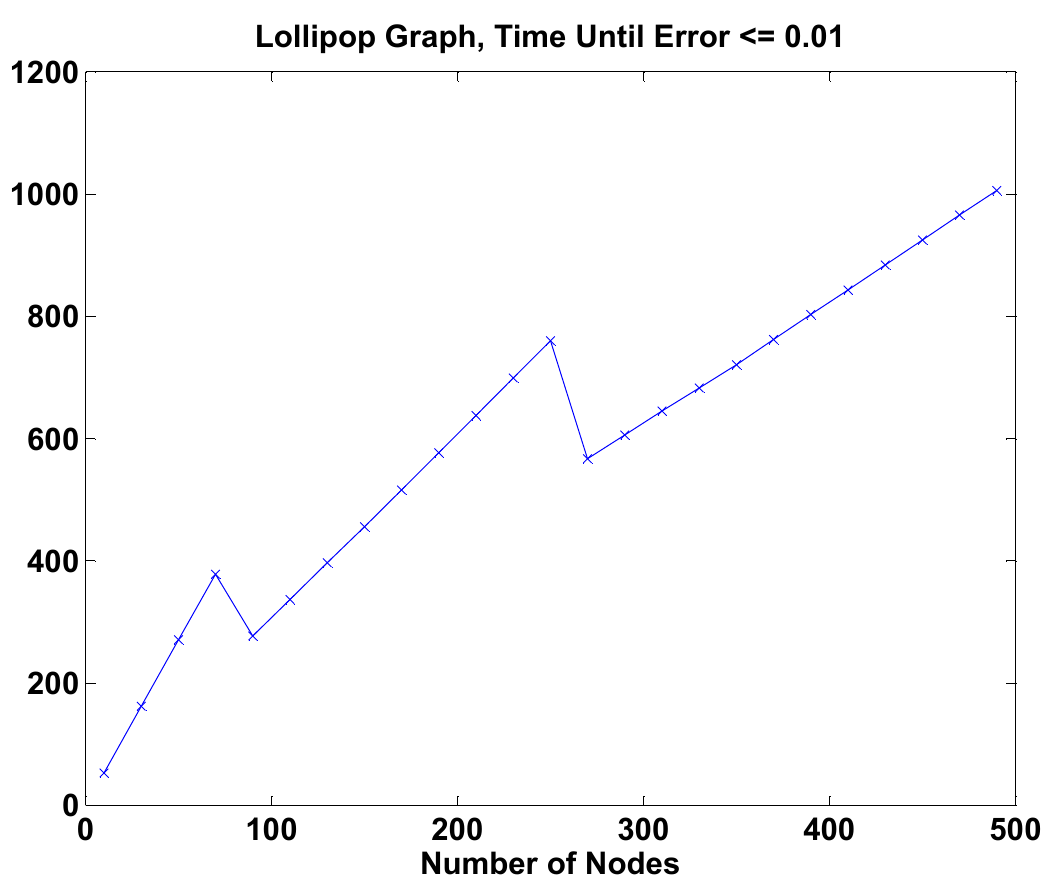} 
\end{tabular} \caption{Convergence time as a function of the number of nodes for the average consensus protocol of Eq. (\ref{alm}). Convergence time for the line graph is shown on the left and for the lollipop graph is shown on the right.} \label{cons-simul}
\end{figure*}

\begin{figure*} 
\begin{tabular}{cc} 
\includegraphics[scale=0.45]{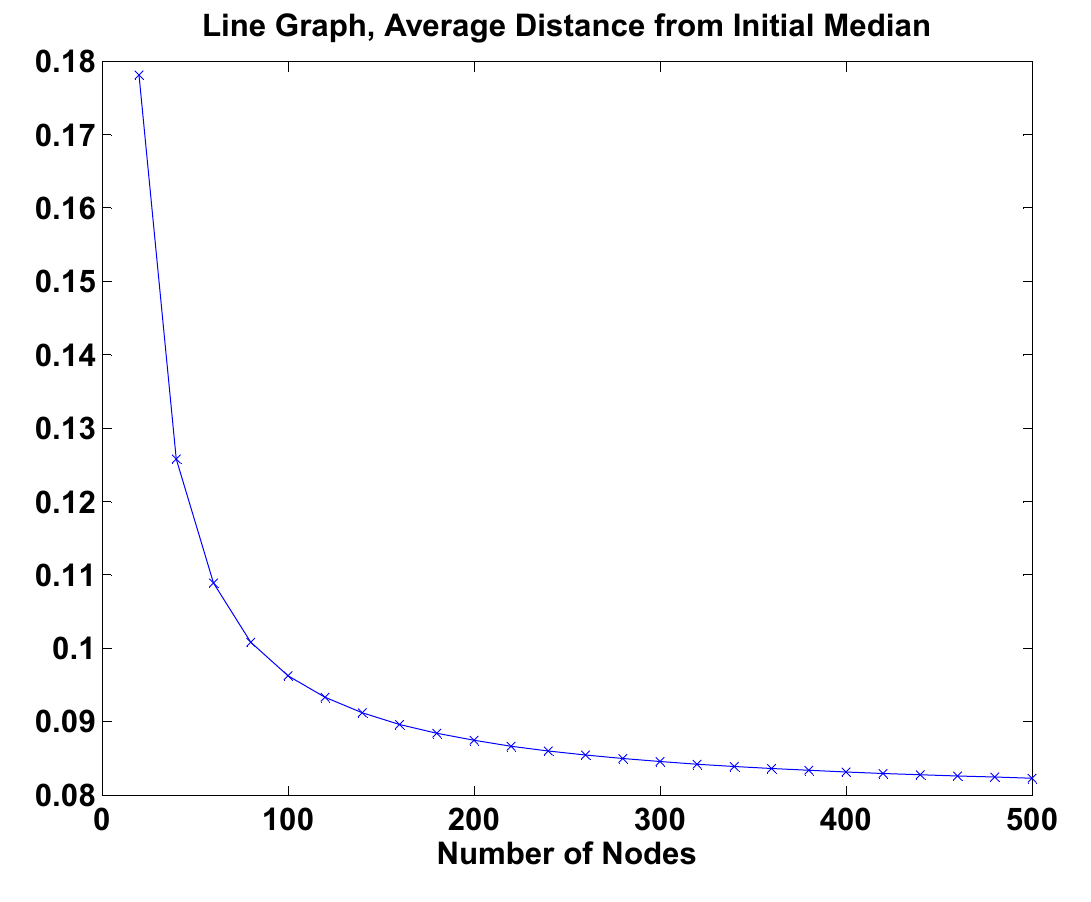} 
&  \includegraphics[scale=0.45]{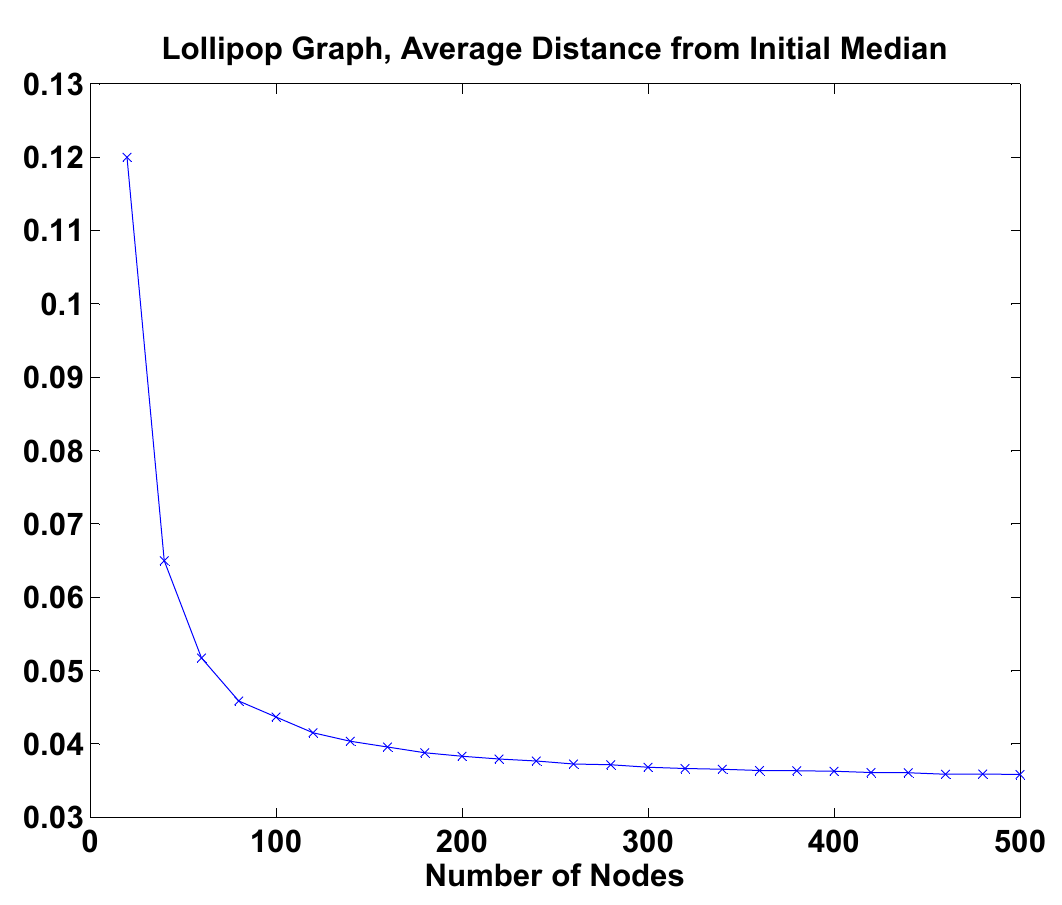} 
\end{tabular} \caption{Average deviation from the median as a function of the number of nodes using the decentralized optimization protocol of Eq. (\ref{optaccel}) after $T=4n$ iterations. The line graph is shown on the left and and convergence time the lollipop graph is shown on the right.} \label{opt-simul}
\end{figure*}

We next consider the problem of median computation: each node in a network starts with a certain value $w_i$ and the nodes would like to 
agree on a median of these values. This can be cast within the decentralized optimization framework by observing that the median of numbers $w_1, \ldots, w_n$ is a solution of the minimization problem
\[ \arg \min_{\theta} ~ \sum_{i=1}^n |\theta-w_i| \] and therefore can be computed in a distributed way using the update of Eq. (\ref{optaccel}) with the convex functions $f_i(\theta)=|\theta-w_i|$. One may take $L=1$, since the subgradients of the absolute value function are bounded by one in magnitude.

We implement the protocol of Eq. \ref{optaccel} on both the line graph and lollipop graph. In both cases, we once again take $U=n$, i.e., we assume that every node knows the total number of nodes in the system. We suppose each node $o$ starts with $x_i(0)=w_i$. Moreover, for $i=1, \ldots, n/2$, we set $w_i$ to be the remainder of $i$ divided by $10$, and
$w_{n/2+i} = -w_i$.  This choice of starting values is motivated by three considerations: (i) our desire to have a simple answer (in this case, the median is always zero) (ii) the desire to have an initial condition whose norm does not grow quickly with $n$ (since our convergence times for decentralized optimization scale with the initial condition as well and we want to focus on the effect of network size)
(iii) the obseervation that this choice creates a ``bottleneck'' - on the line graph, for example, numbering the nodes from $1, \ldots, n$ going from left to right, this results in the left half of the nodes having a positive initial condition and the right half of the nodes having negative initial condition; it is interesting to observe whether our
protocol will be able to cope with such bottlenecks quickly.

Theorem \ref{opthm} suggests good performance will be attained when the number of iterations $T$ is linear in $n$.  Correspondingly, we set $T=4n$. In
Figure \ref{opt-simul}, we plot the number of nodes on the x-axis vs $(1/n) ||\widehat{\y}(T)||_1$ (which is the average deviation from a correct answer since $0$ is a median) on the y-axis.  As can be seen from the figure, choosing a linear number of iterations $T=4n$ clearly suffices to compute the median of integers in the range $[-10,10]$ with excellent accuracy.  

\section{Conclusion\label{sec:concl}} We have presented a protocol for the average consensus problem on a fixed, undirected graph with a linear convergence time in the number of nodes in the network. Furthermore, we have shown one can use modifications of this protocol to obtain linear convergence times in decentralized optimization, formation control, and leader-following.  Given the copious literature on applications of consensus  throughout multi-agent control, we expect that our results may be used to derive linearly convergent protocols for many other problems.

The most natural open problem following this work is to extend the results of this paper to time-varying graphs. The mobility inherent in most multi-agent systems means that communication graphs will vary, often unpredictably so. As of writing, the best convergence time attainable on time-varying sequences of undirected graphs is quadratic in the number of nodes \cite{four}; for time-varying directed graphs, no convergence time which is even polynomial in the number of nodes appears to be known.

\end{document}